\newtheorem{definition}{Definition}
\newtheorem{theorem}{Theorem}
\newtheorem{lemma}{Lemma}
\newtheorem{remark}{Remark}
\newtheorem{example}{Example}
\newtheorem{proposition}{Proposition}
\begin{document}
\thispagestyle{empty} \setcounter{page}{1}

%\noindent
%{\footnotesize {\rm To appear in\\[-1.00mm]
%{\em Dynamics of Continuous, Discrete and Impulsive Systems}}\\[-1.00mm]
%http:monotone.uwaterloo.ca/$\sim$journal} $~$ \\ [.3in]

%%USE THE STUFF BELOW AS A GUIDE TO SET UP THE START OF PAPER%%

\begin{center}
{\Large\bf  On fractional derivatives with exponential kernel and their discrete versions }

\vskip.20in
\vskip.20in
 Thabet Abdeljawad$^{a}$, Dumitru Baleanu$^{b,c}$ \\[2mm]
{\footnotesize $^{a}$Department of Mathematics and Physical Sciences,
Prince Sultan
University\\ P. O. Box 66833,  11586 Riyadh, Saudi Arabia\\
 $^{b}$Department of Mathematics and Computer Science, \c{C}ankaya University\\ 06530 Ankara, Turkey\\
 $^{c}$ Institute of Space Sciences, Magurele-Bucharest, Romania}
\end{center}

\vskip.2in

{\footnotesize \noindent {\bf Abstract}.In this manuscript we define the right fractional derivative and its corresponding right fractional integral with exponential kernel. Then, we provide the integration by parts formula and  we use $Q-$operator to confirm our results. The related  Euler-Lagrange equations were obtained and one  example is analysed. Moreover, we formulate and discus the discrete counterparts of our results.

{\bf Keywords.} Caputo fractional difference,$Q-$operator; discrete exponential function, discrete nabla Laplace transform, convolution.}

\vskip.1in

\section{Introduction} \label{s:1}

The techniques of the fractional calculus are applied successfully in many branches of science and engineering \cite{podlubny, Samko,Kilbas,Magin,dumitru}. The power law effects are described accurately within the fractional calculus approach.
However there are several complex phenomena which do not obey this law, thus we need to find alternatives, e.g. the nonlocal kernels without singularity in order to overcome this issue.
On the other side the discrete version of any fractional derivative is one of the interesting topics nowadays \cite{Th Caputo}-\cite{dualCaputo}. Some of the  applications of the discrete fractional Caputo derivative can be seen in \cite{Wu1, Wu2}.

 For two real numbers $a<b,~~a\equiv~b~(mod~1)$, we denote $\mathbb{N}_a=\{a,a+1,..\}$ and $~_{b}\mathbb{N}=\{b,b-1,...\}$. Also, $\nabla f(t)=f(t)-f(t-1).$

\indent
The $Q-$operator action, $(Qf)(t)=f(a+b-t)$, was used in \cite{dualCaputo, dualR} to relate left and right fractional sums and differences. It was shown that
\begin{itemize}
  \item $(\nabla_a^{-\alpha} Qf)(t)=Q~_{b}\nabla^{-\alpha} f(t)$
  \item $(\nabla_a^{\alpha} Qf)(t)=Q~_{b}\nabla^\alpha f(t)$
  \item $(~^{C}\nabla_a^{\alpha} Qf)(t)=Q~_{b}~^{C}\nabla^\alpha f(t)$
\end{itemize}
The mixing of delta and nabla operators in defining right fractional differences plays an important role in obtaining the above dual identities.

In this article, we will use the action of the discrete version of $Q-$operator to define and confirm our definitions of fractional differences with discrete exponential function kernels.

We recall some facts about discrete transform within nabla. For more general theory of Laplace transform on time scales we refer to \cite{Martin, Adv}.
The (nabla) exponential function on the time scale $\mathbb{Z}$ is given by
\begin{equation}\label{nabla exponential}
  \widehat{e}(\lambda, t-t_0)= (\frac{1}{1-\lambda})^{t-t_0},
\end{equation}
while the (delta) exponential function by
\begin{equation}\label{delta exponential}
 \widetilde{ e}(\lambda, t-t_0)=  (1+\lambda)^{t-t_0}
\end{equation}
In this manuscript, we follow the nabla discrete analysis.
\begin{definition}\label{dl}
The nabla discrete Laplace transform for a function $f$ defined on $\mathbb{N}_0$ is defined by

\begin{equation}\label{dle}
   \mathcal{N} f(z)=\sum_{t=1}^\infty (1-z)^{t-1}f(t),
\end{equation}

More generally for a function $f$  defined on $\mathbb{N}_a$ by
\begin{equation}\label{gdle}
   \mathcal{N}_a f(z)=\sum_{t=a+1}^\infty (1-z)^{t-1}f(t).
   \end{equation}

\end{definition}

\begin{definition} \label{conv}(See \cite{Thsemi} also)
Let $s \in \mathbb{R}$, $0<\alpha <1$ and $f,g:\mathbb{N}_a\rightarrow \mathbb{R}$ be a functions. The nabla discrete convolution of $f$ with $g$ is defined by

\begin{equation}\label{dconv}
   ( f\ast g)(t)=\sum_{s=a+1}^t g(t-\rho(s)) f(s).
\end{equation}
\end{definition}

\begin{proposition} \label{convprop}
For any $\alpha \in \mathbb{R}\setminus\{...,-2,-1,0\}$ , $s \in \mathbb{R}$ and $f,g$ defined on $\mathbb{N}_a$ we have
\begin{equation}\label{conv1}
    (\mathcal{N}_a (f \ast g))(z)=  (\mathcal{N}_af)(z) (\mathcal{N} g)(z).
\end{equation}
\end{proposition}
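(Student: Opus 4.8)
The plan is to insert the definition of the nabla convolution directly into the defining series of the transform and then exchange the order of the two summations. Explicitly, starting from
\begin{equation*}
\mathcal{N}_a(f\ast g)(z)=\sum_{t=a+1}^{\infty}(1-z)^{t-1}\sum_{s=a+1}^{t} g\bigl(t-\rho(s)\bigr)f(s),
\end{equation*}
I would regard this as a sum over the lattice region $\{(t,s): a+1\le s\le t\}$ and reindex it as a sum over $s$ from $a+1$ to $\infty$ with $t$ running from $s$ to $\infty$ (the discrete Fubini/Tonelli step). Since $\rho(s)=s-1$ on $\mathbb{Z}$, the inner argument of $g$ is $t-\rho(s)=t-s+1$, which is exactly the substitution that will later decouple the two factors.

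Next I would perform the change of summation variable $u=t-s+1$ in the inner sum, so that $t=u+s-1$ and the lower limit $t=s$ becomes $u=1$. The crucial algebraic observation is that the exponent splits as $t-1=(s-1)+(u-1)$, which lets me factor the kernel as $(1-z)^{t-1}=(1-z)^{s-1}(1-z)^{u-1}$. After this factorization the double sum separates into a product,
\begin{equation*}
\Bigl(\sum_{s=a+1}^{\infty}(1-z)^{s-1}f(s)\Bigr)\Bigl(\sum_{u=1}^{\infty}(1-z)^{u-1}g(u)\Bigr),
\end{equation*}
and by Definitions \ref{dl} and \ref{conv} the first factor is $(\mathcal{N}_a f)(z)$ and the second is $(\mathcal{N} g)(z)$, which is precisely \eqref{conv1}. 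Note that the appearance of the unshifted transform $\mathcal{N}$ (based at $0$) on $g$ is a direct consequence of the lower limit $u=1$ produced by the reindexing.

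I expect the main obstacle to be purely analytic rather than algebraic: the interchange of the two infinite summations and the rearrangement into a Cauchy product must be justified by absolute convergence. I would therefore argue that, for $f$ and $g$ of at most exponential growth, there is a region in the complex $z$-plane (equivalently, a constraint on $|1-z|$ determined by the growth rates of $f$ and $g$) in which every series involved converges absolutely; on that common region Tonelli's theorem for nonnegative terms applied to the absolute values licenses both the swap of order and the variable shift, after which the identity of formal series becomes an identity of analytic functions. The algebraic manipulations with the limits of summation and the index shift $u=t-\rho(s)$ are routine once this convergence is in place, so I would state the growth hypothesis explicitly and keep the convergence argument short.
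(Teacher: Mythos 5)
Your proposal is correct and follows essentially the same route as the paper's proof: interchange the order of the double summation, substitute $r=t-\rho(s)$ so that the kernel factors as $(1-z)^{t-1}=(1-z)^{s-1}(1-z)^{r-1}$, and read off the product $(\mathcal{N}_a f)(z)(\mathcal{N}g)(z)$. Your added remark on justifying the interchange by absolute convergence under an exponential-growth hypothesis is a welcome extra precision that the paper's purely formal computation omits, but it does not change the substance of the argument.
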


\begin{proof}

\begin{eqnarray}
\nonumber
 (\mathcal{N}_a (f \ast g))(z) &=& \sum_{t=a+1}^\infty (1-z)^{t-1} \sum_{a+1}^t f(s) g(t-\rho(s)) \\ \nonumber
   &=& \sum_{s=a+1}^\infty \sum_{t=s}^\infty (1-z)^{t-1} \sum_{a+1}^t f(s) g(t-\rho(s)) \\ \nonumber
   &=&  \sum_{s=a+1}^\infty \sum_{r=1}^\infty (1-z)^{r-1}(1-z)^{s-1}f(s)g(r)\\ \nonumber
   &=&  (\mathcal{N}_af)(z) (\mathcal{N} g)(z),
\end{eqnarray}
where the change of variable $r=t-\rho(s)$ was used.

\end{proof}

For the delta Laplace convolution theory we refer to (\cite{Martin}, Section 3.10).

\begin{lemma}\label{lap of nabla}\cite{Thsemi}
Let $f$ be defined on $\mathbb{N}_0$. Then

\begin{equation}\label{lap of 1}
    (\mathcal{N} \nabla(f(t))(z)=z (\mathcal{N}f)(z)-f(0).
\end{equation}
\end{lemma}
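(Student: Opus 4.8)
The plan is to evaluate $\mathcal{N}(\nabla f)(z)$ directly from the series definition in \eqref{dle} and collapse everything back to a single copy of $\mathcal{N}f(z)$ by one reindexing. First I would insert $\nabla f(t)=f(t)-f(t-1)$ into the defining sum and split the result into two series,
\[
\sum_{t=1}^\infty (1-z)^{t-1} f(t)\;-\;\sum_{t=1}^\infty (1-z)^{t-1} f(t-1).
\]
The first piece is exactly $\mathcal{N}f(z)$ by definition, so the only real work is to express the second piece in terms of $\mathcal{N}f(z)$ and a boundary value.

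For the second sum I would substitute $u=t-1$, which sends the range $t\ge 1$ to $u\ge 0$ and turns the summand into $(1-z)^{u} f(u)$. Peeling off the $u=0$ term gives $f(0)$, and for the tail $u\ge 1$ I would pull out one factor of $(1-z)$ using $(1-z)^{u}=(1-z)(1-z)^{u-1}$, so that $\sum_{u\ge 1}(1-z)^{u} f(u)=(1-z)\sum_{u\ge 1}(1-z)^{u-1} f(u)=(1-z)\mathcal{N}f(z)$. Hence the second sum equals $f(0)+(1-z)\mathcal{N}f(z)$.

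Combining the two pieces then yields
\[
\mathcal{N}(\nabla f)(z)=\mathcal{N}f(z)-f(0)-(1-z)\mathcal{N}f(z)=z\,\mathcal{N}f(z)-f(0),
\]
since $1-(1-z)=z$, which is the claimed identity.

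The computation is essentially discrete summation by parts, so nothing here is deep; the one point that needs care is the index shift, which is exactly what produces the boundary term $f(0)$ and must not be dropped. The only genuine hypothesis is that the series defining $\mathcal{N}f(z)$ converges (absolutely, for $z$ in the appropriate region), which is what legitimizes splitting the series into two convergent pieces and reindexing one of them; I would simply record this convergence assumption rather than belabor it.
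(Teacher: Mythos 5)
Your computation is correct: inserting $\nabla f(t)=f(t)-f(t-1)$ into the defining series \eqref{dle}, reindexing the shifted sum via $u=t-1$, and peeling off the $u=0$ term yields $f(0)+(1-z)\mathcal{N}f(z)$ for that piece, whence $\mathcal{N}(\nabla f)(z)=z\,\mathcal{N}f(z)-f(0)$ exactly as claimed. Note that the paper itself gives no proof of this lemma --- it is quoted from \cite{Thsemi} --- so there is no internal argument to compare against; your direct series manipulation is the standard proof one would expect in that reference, and your remark about needing convergence of $\mathcal{N}f$ to justify the split and reindexing is the right caveat to record. As a sanity check, your identity also agrees with the paper's own generalization, Lemma \ref{general lap of nabla}, whose boundary term $(1-z)^a f(a)$ reduces to $f(0)$ when $a=0$.
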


We can generalize Lemma \ref{lap of nabla} as follows:

\begin{lemma}\label{general lap of nabla}
Let $f$ be defined on $\mathbb{N}_a$. Then

\begin{equation}\label{lap of 1}
    (\mathcal{N}_a \nabla(f(t))(z)=z (\mathcal{N}_af)(z)-(1-z)^a f(a).
\end{equation}

\end{lemma}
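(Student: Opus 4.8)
The plan is to compute the transform directly from Definition~\ref{dl} and reduce everything to a single index shift. First I would write out, by the definition \eqref{gdle} applied to $\nabla f$,
\begin{equation}
(\mathcal{N}_a \nabla f)(z) = \sum_{t=a+1}^\infty (1-z)^{t-1}\bigl(f(t)-f(t-1)\bigr),
\end{equation}
and split this by linearity into two series. The first series is exactly $(\mathcal{N}_a f)(z)$ again by \eqref{gdle}, so no work is needed there; all the content lives in the second series.

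The substantive step is the term $\sum_{t=a+1}^\infty (1-z)^{t-1} f(t-1)$. Here I would introduce the shifted index $s=t-1$, so that the summation runs over $s\geq a$ and the weight becomes $(1-z)^{s}$, producing $\sum_{s=a}^\infty (1-z)^{s} f(s)$. The lower limit now starts at $s=a$, one step below the range used by $\mathcal{N}_a$. I would therefore peel off the $s=a$ term, which contributes $(1-z)^a f(a)$, and factor a single $(1-z)$ out of the remaining tail, using $\sum_{s=a+1}^\infty (1-z)^{s} f(s) = (1-z)\sum_{s=a+1}^\infty (1-z)^{s-1} f(s) = (1-z)(\mathcal{N}_a f)(z)$.

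Combining the two pieces gives
\begin{equation}
(\mathcal{N}_a \nabla f)(z) = (\mathcal{N}_a f)(z) - (1-z)^a f(a) - (1-z)(\mathcal{N}_a f)(z) = \bigl(1-(1-z)\bigr)(\mathcal{N}_a f)(z) - (1-z)^a f(a),
\end{equation}
and the elementary simplification $1-(1-z)=z$ yields the claimed identity. The only point requiring care — the main, albeit modest, obstacle — is the bookkeeping at the lower endpoint: the shift $s=t-1$ moves the boundary from $a+1$ down to $a$, and it is precisely the extra $s=a$ term, absent from the range of $\mathcal{N}_a$, that generates the boundary contribution $(1-z)^a f(a)$; getting this term and its sign right is the crux. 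One should also note that these manipulations are justified formally, or under the assumption that the defining series converge in a suitable region of $z$, matching the convention already in force for Definition~\ref{dl}. This recovers Lemma~\ref{lap of nabla} as the special case $a=0$, where $(1-z)^0 f(0)=f(0)$.
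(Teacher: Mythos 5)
Your proof is correct: splitting $(\mathcal{N}_a\nabla f)(z)$ by linearity, shifting the index via $s=t-1$, peeling off the $s=a$ boundary term $(1-z)^a f(a)$, and simplifying $1-(1-z)=z$ yields exactly the stated identity, with the correct sign on the boundary term and the correct specialization to Lemma~\ref{lap of nabla} at $a=0$. The paper itself gives no proof of Lemma~\ref{general lap of nabla} (it is stated as an unproved generalization of Lemma~\ref{lap of nabla}, which is quoted from the literature), so your direct computation supplies precisely the argument the authors left implicit.
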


It is well-known that the (delta or nabla )Laplace  transform of the constat function $1$ on any time scale is $\frac{1}{z}$ and the Lapalace transform  of the exponential function is $\frac{1}{z-\lambda}$.

\indent

\indent

\section{Left and right fractional derivatives with exponential nonsingular kernel and their correspondent integral operators}
In this section we will define the discrete fractional derivatives with nabla exponential kernels. From the nabla time scale calculus (see \cite{Martin}, page 55), we recall that the nabla exponential function on the $\mathbb{Z}$ (or $\mathbb{N}_a$) time scale is given by $\widehat{e}_\lambda(t,a)=(\frac{1}{1-\lambda})^{t-a},~~~~t \in \mathbb{N}_a$. According to \cite{FCaputo}, if $f \in H^1(a,b),~~a<b,~~\alpha \in [0,1]$, then, the new (left) Caputo fractional derivative is:
\begin{equation}\label{nCd}
 ( ~^{CFC}~_{a}D^\alpha f)(t)=\frac{M(\alpha)}{1-\alpha}\int_a^t f^\prime(s) exp(\lambda (t-s))ds,~~ \lambda= \frac{-\alpha}{1-\alpha}.
\end{equation}
The right new (Caputo) fractional derivative can be defined by
\begin{equation}\label{nCRd}
 ( ~^{CFC}D_b^\alpha f)(t)=-\frac{M(\alpha)}{1-\alpha}\int_t^b f^\prime(s) exp(\lambda (s-t))ds,~~ \lambda= \frac{-\alpha}{1-\alpha}.
\end{equation}

and the left new Riemann fractional derivative by
\begin{equation}
 ( ~^{CFR}~_{a}D^\alpha f)(t)=\frac{M(\alpha)}{1-\alpha}\frac{d}{dt}\int_a^t f(s) exp(\lambda (t-s))ds,~~ \lambda= \frac{-\alpha}{1-\alpha}.
\end{equation}

and the right new Riemann fractional derivative by
\begin{equation}
 ( ~^{CFR}D_b^\alpha f)(t)=-\frac{M(\alpha)}{1-\alpha}\frac{d}{dt}\int_t^b f(s) exp(\lambda (s-t))ds,~~ \lambda= \frac{-\alpha}{1-\alpha}.
\end{equation}
It can be easily shown that $( ~^{CFR}~_{a}D^\alpha Q f)(t)=Q( ~^{CFR}D_b^\alpha  f)(t)$ and $( ~^{CFC}~_{a}D^\alpha Q f)(t)=Q( ~^{CFC}D_b^\alpha  f)(t)$.

Consider the equation $(~^{CFR}~_{a}D^\alpha f)(t)=u(t)$. If we apply Laplace transform starting at $a$ to both sides and use the convolution theorem , we conclude that
$$F(s)=\frac{1-\alpha}{B(\alpha)} U(s)-\frac{\alpha}{B(\alpha)} \frac{U(s)}{s},$$
where $F(s)=L_a\{f(t)\}(s)$ and $U(s)=L_a\{u(t)\}(s)$. Then, apply the inverse Laplace to reach at

$$f(t)=\frac{1-\alpha}{B(\alpha)}u(t)+\frac{\alpha}{B(\alpha)}\int_a^tu(s)ds.$$ Therefore, we can define the correspondent left fractional integral
of $(~^{CF}~_{a}D^\alpha)$ by
\begin{equation}\label{lfrac}
  (~^{CF}~_{a}I^\alpha u)(t)=\frac{1-\alpha}{B(\alpha)}u(t)+\frac{\alpha}{B(\alpha)}\int_a^tu(s)ds.
\end{equation}

We define the right fractional integral
\begin{equation}\label{rfrac}
  (~^{CF}I_b^\alpha u)(t)=\frac{1-\alpha}{B(\alpha)}u(t)+\frac{\alpha}{B(\alpha)}\int_t^b u(s)ds,
\end{equation}

 so that $(~^{CF}~_{a}I^\alpha Qu)(t)=Q(~^{CF}I_b^\alpha u)(t)$.
Furthermore, by the action of the $Q$-operator we can easily show that $(~^{CF}I_b^\alpha ~^{CF}D_b^\alpha u )(t)=u(t)$.

\indent

Conversely, consider the equation $$(~^{CF}~_{a}I^\alpha u)(t)=\frac{1-\alpha}{B(\alpha)}u(t)+\frac{\alpha}{B(\alpha)}\int_a^tu(s)ds=f(t).$$
Apply the Laplace transform to see that $$\frac{1-\alpha}{B(\alpha)}U(s)=\frac{\alpha}{sB(\alpha)} F(s).$$

From which it follows that

\begin{equation}\label{xx1}
  U(s)=\frac{B(\alpha)}{1-\alpha} \frac{sF(s)}{s-\lambda}.
\end{equation}
The right hand side of (\ref{xx1}) is nothing but the Laplace of $(~^{CFR}~_{a}D^\alpha f)(t)$. Hence, $$(~^{CF}~_{a}I^\alpha~^{CFR}~_{a}D^\alpha f)(t)=f(t).$$

Similarly or by the action of the $Q-$operator we can show that $$(~^{CF}I_b^\alpha~^{CF}D_b^\alpha f)(t)=f(t).$$

\begin{remark} (The action of the  $Q-$operator)The $Q$-operator acts regularly between left and right new fractional differences as follow:

\begin{itemize}
  \item $(Q~^{CFR}~_{a}D^\alpha f)(t)= (~^{CFR}D^\alpha_b Qf)(t)$
  \item $(Q~^{CFC}~_{a}D^\alpha f)(t)= (~^{CFC}D^\alpha_b Qf)(t)$
\end{itemize}

\end{remark}

\begin{proposition} (The relation between Riemann and Caputo type fractional derivatives with exponential kernels) \label{relation between C and R}

 \begin{itemize}
   \item $(~^{CFC}~_{a}D^\alpha f)(t)=(~^{CFR}~_{a}D^\alpha f)(t)-\frac{B(\alpha)}{1-\alpha} f(a)e^{\lambda(t-a)}$.
   \item $(~^{CFC}D_b^\alpha f)(t)=(~^{CFR}D_b^\alpha f)(t)-\frac{B(\alpha)}{1-\alpha} f(b)e^{\lambda(b-t)}$.
 \end{itemize}
 \end{proposition}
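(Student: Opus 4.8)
The plan is to reduce both identities to elementary calculus—Leibniz's differentiation rule and integration by parts applied to the integral representations—and then to deduce the right-sided identity from the left-sided one through the $Q$-operator. Throughout I treat the normalization $M(\alpha)$ appearing in the definitions and the constant $B(\alpha)$ appearing in the statement as one and the same.

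First I would establish the left identity. In the Riemann derivative the integrand $f(s)e^{\lambda(t-s)}$ depends on $t$ both through the upper limit and through the kernel, so Leibniz's rule gives
\[
\frac{d}{dt}\int_a^t f(s)e^{\lambda(t-s)}\,ds = f(t)+\lambda\int_a^t f(s)e^{\lambda(t-s)}\,ds,
\]
the free term coming from $e^{\lambda(t-t)}=1$. Hence
\[
(~^{CFR}~_{a}D^\alpha f)(t)=\frac{B(\alpha)}{1-\alpha}\Bigl[f(t)+\lambda\int_a^t f(s)e^{\lambda(t-s)}\,ds\Bigr].
\]
Next I would integrate by parts in the Caputo derivative, differentiating $f$ and integrating the kernel in $s$; since $\frac{d}{ds}e^{\lambda(t-s)}=-\lambda e^{\lambda(t-s)}$, the boundary term yields $f(t)-f(a)e^{\lambda(t-a)}$ and the surviving integral is again $\lambda\int_a^t f(s)e^{\lambda(t-s)}\,ds$, so
\[
(~^{CFC}~_{a}D^\alpha f)(t)=\frac{B(\alpha)}{1-\alpha}\Bigl[f(t)-f(a)e^{\lambda(t-a)}+\lambda\int_a^t f(s)e^{\lambda(t-s)}\,ds\Bigr].
\]
Subtracting, the $f(t)$ terms and the integrals cancel and only $-\frac{B(\alpha)}{1-\alpha}f(a)e^{\lambda(t-a)}$ remains, which is the first claimed identity.

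For the second identity I would invoke the duality relations recorded in the preceding Remark, namely $(Q~^{CFC}~_{a}D^\alpha f)(t)=(~^{CFC}D^\alpha_b Qf)(t)$ and its Riemann analogue. Replacing $f$ by $Qf$ in the left identity and applying $Q$ throughout, I would use $(Qf)(a)=f(a+b-a)=f(b)$ and follow the reflection $t\mapsto a+b-t$; under it the quantity $t-a$ becomes $b-t$, so $e^{\lambda(t-a)}$ is carried to $e^{\lambda(b-t)}$, producing exactly the right-sided identity. Equally, one may rerun the direct computation for the right operators, where the only differences are the overall minus sign in the definitions and the sign flip from differentiating $e^{\lambda(s-t)}$ in $t$, which moves the boundary contribution to the endpoint $s=b$ and delivers $f(b)e^{\lambda(b-t)}$.

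The computation is routine and there is no real obstacle; the sole point demanding care is the bookkeeping in the $Q$-operator step—confirming that the reflection evaluates $f$ at the endpoint $b$ rather than $a$ and simultaneously converts the exponential's argument from $t-a$ to $b-t$. Keeping this substitution explicit is what guarantees that the correction term in the right identity decays away from $b$, mirroring the left identity's decay away from $a$.
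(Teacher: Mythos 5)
Your proof is correct, but it follows a genuinely different route from the paper's for the first identity. The paper works entirely in the Laplace transform domain: it quotes the transforms
\[
L_a\{(~^{CFR}~_{a}D^\alpha f)(t)\}(s)=\frac{B(\alpha)}{1-\alpha}\,\frac{sF(s)}{s-\lambda},
\qquad
L_a\{(~^{CFC}~_{a}D^\alpha f)(t)\}(s)=\frac{B(\alpha)}{1-\alpha}\,\frac{sF(s)-f(a)e^{-as}}{s-\lambda},
\]
subtracts them, and inverts using the shift rule $L_a\{f(t-a)\}(s)=e^{-as}L\{f(t)\}(s)$ together with the fact that $1/(s-\lambda)$ is the transform of $e^{\lambda t}$. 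Your argument instead stays in the time domain: Leibniz's rule turns the Riemann derivative into $\frac{B(\alpha)}{1-\alpha}\bigl[f(t)+\lambda\int_a^t f(s)e^{\lambda(t-s)}ds\bigr]$, integration by parts turns the Caputo derivative into $\frac{B(\alpha)}{1-\alpha}\bigl[f(t)-f(a)e^{\lambda(t-a)}+\lambda\int_a^t f(s)e^{\lambda(t-s)}ds\bigr]$, and the difference is read off directly. Your approach is more elementary and self-contained: it needs only $f\in H^1(a,b)$ and avoids the transform machinery, the convolution theorem, and the somewhat delicate bookkeeping of Laplace transforms based at $a$ (where the paper's $e^{-as}$ factors must be tracked carefully). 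The paper's approach, on the other hand, fits naturally into its surrounding development, since the same two transform formulas are what it uses to identify the associated fractional integrals as inverse operators, so the proposition comes essentially for free once that framework is in place. For the second identity both you and the paper use the $Q$-operator duality, and your bookkeeping there is right: $(Qf)(a)=f(b)$ and the reflection $t\mapsto a+b-t$ carries $e^{\lambda(t-a)}$ to $e^{\lambda(b-t)}$; your remark that one could equally redo the direct computation for the right-sided operators is also sound. Finally, your decision to identify the paper's $M(\alpha)$ (in the definitions) with $B(\alpha)$ (in the statement) is the correct reading of an inconsistency in the paper's own notation.
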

\begin{proof}
From the relations
\begin{equation}\label{rel1}
  L_a\{(~^{CFR}~_{a}D^\alpha f)(t)\}(s)= \frac{B(\alpha)}{1-\alpha} \frac{sF(s)}{s-\lambda},
\end{equation}
and
\begin{equation}\label{rel2}
  L_a\{(~^{CFC}~_{a}D^\alpha f)(t)\}(s)= \frac{B(\alpha)}{1-\alpha} \frac{sF(s)}{s-\lambda}-\frac{B(\alpha)}{1-\alpha}f(a)e^{-as}\frac{1}{s-\lambda},
\end{equation}
we conclude that
\begin{equation}\label{rel3}
   L_a\{(~^{CFC}~_{a}D^\alpha f)(t)\}(s)= L_a\{(~^{CFR}~_{a}D^\alpha f)(t)\}(s)-\frac{B(\alpha)}{1-\alpha}f(a)e^{-as}\frac{1}{s-\lambda}.
\end{equation}
Applying the inverse Laplace to (\ref{rel3}) and making use of the fact that $L_a\{f(t-a)\}(s)=e^{-as}L\{f(t)\}(s)$ we reach our conclusion in the first part. The second part can be proved by the first part and the action of the $Q-$operator.
\end{proof}
The  following result  is very useful tool to solve fractional dynamical systems within Caputo fractional derivative with exponential  kernals.
\begin{proposition}
For $0< \alpha< 1$, we have

\begin{eqnarray}\label{fintegral of Caputo}
 \nonumber
  ( ~^{AB}~_{a}I^\alpha ~^{ABC}~_{a}D^\alpha f)(x) &=& f(x)-f(a)e^{\lambda(x-a)}-\frac{\alpha}{1-\alpha}f(a)\int_a^x e^{\lambda(x-s)}ds \\  \nonumber
   &=& f(x)-f(a).
\end{eqnarray}
 Similarly,

 \begin{equation}\label{rfintegral of Caputo}
   ( ~^{AB}I_b^\alpha ~^{ABC}D_b^\alpha f)(x)=f(x)-f(b)
 \end{equation}
\end{proposition}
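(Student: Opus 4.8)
The plan is to reduce this composition to the Riemann inversion already recorded in the excerpt, isolating the boundary term that separates the Caputo operator from its Riemann counterpart. First I would invoke the Caputo--Riemann relation for these exponential-kernel operators, the exact analogue of Proposition~\ref{relation between C and R}, namely
\begin{equation}
(~^{ABC}~_{a}D^\alpha f)(t)=(~^{ABR}~_{a}D^\alpha f)(t)-\frac{B(\alpha)}{1-\alpha}\,f(a)\,e^{\lambda(t-a)}.
\end{equation}
This writes $^{ABC}~_aD^\alpha f$ as a Riemann derivative minus an explicit multiple of $e^{\lambda(t-a)}$, so that applying $^{AB}~_aI^\alpha$ and using linearity splits the problem into two tractable pieces.

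The first piece, $(~^{AB}~_aI^\alpha\,{}^{ABR}~_aD^\alpha f)(x)$, is exactly $f(x)$ by the inversion identity $(~^{CF}~_aI^\alpha\,{}^{CFR}~_aD^\alpha f)(t)=f(t)$ obtained earlier through the Laplace transform. For the second piece I would compute $(~^{AB}~_aI^\alpha e^{\lambda(\cdot-a)})(x)$ straight from the definition $(~^{AB}~_aI^\alpha u)(x)=\frac{1-\alpha}{B(\alpha)}u(x)+\frac{\alpha}{B(\alpha)}\int_a^x u(s)\,ds$, obtaining $\frac{1-\alpha}{B(\alpha)}e^{\lambda(x-a)}+\frac{\alpha}{B(\alpha)}\int_a^x e^{\lambda(s-a)}\,ds$. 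Multiplying by the prefactor $\frac{B(\alpha)}{1-\alpha}f(a)$ and applying the reflection $s\mapsto a+x-s$ (which turns $e^{\lambda(s-a)}$ into $e^{\lambda(x-s)}$ and shows the two integrals coincide) produces precisely the first displayed equality of the proposition.

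The remaining step, the collapse to $f(x)-f(a)$, is the only genuinely arithmetic part and is where I expect whatever difficulty there is to sit. Here I would use the defining value $\lambda=-\alpha/(1-\alpha)$, so $\alpha/(1-\alpha)=-\lambda$, together with the closed form $\int_a^x e^{\lambda(x-s)}\,ds=\frac{1}{\lambda}\big(e^{\lambda(x-a)}-1\big)$. Substituting these into $f(a)e^{\lambda(x-a)}+\frac{\alpha}{1-\alpha}f(a)\int_a^x e^{\lambda(x-s)}\,ds$ cancels the exponential contributions and leaves exactly $f(a)$, which yields $f(x)-f(a)$. Finally, rather than redo the argument for the right-sided statement $(~^{AB}I_b^\alpha\,{}^{ABC}D_b^\alpha f)(x)=f(x)-f(b)$, I would transfer the left-sided identity using the $Q$-operator dual relations listed in the preceding Remark, exactly as the paper elsewhere uses $Q$ to pass between left and right operators.
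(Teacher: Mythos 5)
Your proof is correct and follows exactly the route the paper intends: the paper states this proposition without a separate proof, but its displayed intermediate expression $f(x)-f(a)e^{\lambda(x-a)}-\frac{\alpha}{1-\alpha}f(a)\int_a^x e^{\lambda(x-s)}\,ds$ is precisely what your decomposition (the Caputo--Riemann relation, the inversion identity $(~^{CF}~_{a}I^\alpha\,{}^{CFR}~_{a}D^\alpha f)(t)=f(t)$, and evaluation of the integral operator on the exponential) produces. Your arithmetic collapse via $\lambda=-\alpha/(1-\alpha)$ and your use of the $Q$-operator to transfer the identity to the right-sided case are both sound and match how the paper handles the analogous steps throughout.
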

\section{Integration by parts for fractional derivatives with exponential kernels}

Before we present an integration by part formula for the new proposed fractional derivatives and integrals we introduce the following function spaces:
For $p\geq1$ and $\alpha >0$, we define

\begin{equation}\label{nn1}
  (~^{CF}~_{a}I^\alpha (L_p)=\{f: f=~^{CF}~_{a}I^\alpha \varphi, ~~\varphi \in L_p(a,b)\}.
\end{equation}
and
\begin{equation}\label{nn2}
  (~^{CF}I_b^\alpha (L_p)=\{f: f=~^{CF}I_b^\alpha \phi, ~~\phi \in L_p(a,b)\}.
\end{equation}

\indent

Above it was  shown that the left fractional   operator $~^{CFR}~_{a}D^\alpha$ and its associate fractional integral $~^{CF}~_{a}I^\alpha$ satisfy $(~^{CFR}~_{a}D^\alpha ~^{CF}~_{a}I^\alpha f)(t)=f(t)$ and  that  $(~^{CFR}D_b^\alpha ~^{CF}I_b^\alpha f)(t)=f(t)$ . Also it was shown that $(~^{CF}~_{a}I^\alpha ~^{CFR}~_{a}D^\alpha  f)(t)=f(t)$ and $(~^{CF}I_b^\alpha ~^{CFR}D_b^\alpha  f)(t)=f(t)$ and hence the function spaces $(~^{CF}~_{a}I^\alpha (L_p)$ and $(~^{CF}I_b^\alpha (L_p)$ are nonempty.

\begin{theorem} (Integration by parts)\label{Integration by parts}
Let $\alpha >0$, $p\geq 1,~q \geq 1$, and $\frac{1}{p}+\frac{1}{q}\leq 1+\alpha$ ($p\neq1$  and $q\neq1$ in the case $\frac{1}{p}+\frac{1}{q}=1+\alpha$ ). Then
\begin{itemize}
  \item If  $\varphi(x) \in L_p(a,b) $ and $\psi(x) \in L_q(a,b)$ , then
   \begin{eqnarray}
            \nonumber
             \int_a^b \varphi(x) (~^{FC}~_{a}I^\alpha\psi)(x)dx &=& \frac{1-\alpha}{B(\alpha)}\int_a^b \psi(x)\varphi(x)dx+ \frac{\alpha}{B(\alpha)}\int_a^b \psi(x) (\int_x^b \psi(t)dt )dx \\
              &=& \int_a^b \psi(x) (~^{CF}I_b^\alpha\varphi(x)dx
           \end{eqnarray}

      and similarly,
      \begin{eqnarray}
      \nonumber
        \int_a^b \varphi(x) (~^{CF}I_b^\alpha\psi)(x)dx&=& \frac{1-\alpha}{B(\alpha)}\int_a^b \psi(x)\varphi(x)dx+ \frac{\alpha}{B(\alpha)}\int_a^b \psi(x)(\int_a^x \varphi(t) dt)dx \\
         &=&  \int_a^b \psi(x) (~^{CF}~_{a}I^\alpha\varphi)(x)dx
      \end{eqnarray}

  \item If $f(x) \in ~^{CF}I_b^\alpha (L_p) $ and $g(x) \in ~^{CF}~_{a}I^\alpha (L_q)$, then  $$\int_a^b f(x) (~^{CFR}~_{a}D^\alpha g)(x)dx=\int_a^b (~^{CFR}D_b^\alpha f)(x) g(x)dx$$
\end{itemize}
\end{theorem}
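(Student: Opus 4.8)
The plan is to reduce both bullets to elementary manipulations: the first bullet is a Fubini (Dirichlet) interchange of the order of integration, and the second then follows formally by invoking the inversion relations between the Riemann-type derivatives and their associated integrals that were recalled just before the statement.

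First I would prove the first bullet by inserting the definition $(~^{CF}~_{a}I^\alpha\psi)(x)=\frac{1-\alpha}{B(\alpha)}\psi(x)+\frac{\alpha}{B(\alpha)}\int_a^x\psi(s)\,ds$ into $\int_a^b\varphi(x)(~^{CF}~_{a}I^\alpha\psi)(x)\,dx$ and splitting it into the pointwise term $\frac{1-\alpha}{B(\alpha)}\int_a^b\varphi(x)\psi(x)\,dx$ and the iterated term $\frac{\alpha}{B(\alpha)}\int_a^b\varphi(x)\big(\int_a^x\psi(s)\,ds\big)\,dx$. The decisive step is to view the iterated term over the triangle $\{a\le s\le x\le b\}$ and swap the order of integration, so that
$$\int_a^b\varphi(x)\Big(\int_a^x\psi(s)\,ds\Big)dx=\int_a^b\psi(s)\Big(\int_s^b\varphi(x)\,dx\Big)ds.$$
Relabelling the outer variable $s$ as $x$ and the inner $x$ as $t$ displays exactly $\frac{\alpha}{B(\alpha)}\int_a^b\psi(x)\big(\int_x^b\varphi(t)\,dt\big)dx$; recombining with the symmetric pointwise term recognizes the right-hand side as $\int_a^b\psi(x)(~^{CF}I_b^\alpha\varphi)(x)\,dx$ by the definition of $~^{CF}I_b^\alpha$. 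The second displayed identity of this bullet is obtained by the identical computation with the roles of left and right integrals interchanged, or more quickly by applying the $Q$-operator together with the already established relation $(~^{CF}~_{a}I^\alpha Qu)(t)=Q(~^{CF}I_b^\alpha u)(t)$.

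The hypotheses $p,q\ge1$ and $\frac1p+\frac1q\le1+\alpha$ serve only to guarantee that every integral above is absolutely convergent so that Fubini's theorem applies, and checking this by H\"older's inequality is the only point that needs care — though it is a mild one here, since the kernel is an ordinary order-one integral rather than a singular one. Indeed the map $x\mapsto\int_a^x\psi(s)\,ds$ is absolutely continuous, hence bounded on the finite interval $[a,b]$, so its product with $\varphi\in L_p$ lies in $L_1(a,b)$, while $\varphi\psi\in L_1(a,b)$ as soon as $\frac1p+\frac1q\le1$. Thus the interchange is legitimate and, because there is no singularity in the kernel, no boundary terms are produced.

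For the second bullet I would use the function-space hypotheses to write $f=~^{CF}I_b^\alpha\phi$ with $\phi\in L_p(a,b)$ and $g=~^{CF}~_{a}I^\alpha\varphi$ with $\varphi\in L_q(a,b)$. The inversion identities $(~^{CFR}D_b^\alpha ~^{CF}I_b^\alpha f)(t)=f(t)$ and $(~^{CFR}~_{a}D^\alpha ~^{CF}~_{a}I^\alpha f)(t)=f(t)$ recalled above then give $(~^{CFR}D_b^\alpha f)(x)=\phi(x)$ and $(~^{CFR}~_{a}D^\alpha g)(x)=\varphi(x)$. Substituting these into the claimed identity turns it into
$$\int_a^b(~^{CF}I_b^\alpha\phi)(x)\,\varphi(x)\,dx=\int_a^b\phi(x)\,(~^{CF}~_{a}I^\alpha\varphi)(x)\,dx,$$
which is precisely the first bullet applied with $\psi=\phi$ and $\varphi$ in the outer slot. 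Hence the second bullet is an immediate corollary of the first; the memberships $f\in~^{CF}I_b^\alpha(L_p)$ and $g\in~^{CF}~_{a}I^\alpha(L_q)$ are exactly what supply the exponents $p,q$ demanded by the first part, which is where the admissibility condition $\frac1p+\frac1q\le1+\alpha$ is consumed.
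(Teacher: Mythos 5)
Your proposal follows essentially the same route as the paper's own proof: the first bullet by inserting the definition of $(~^{CF}~_{a}I^\alpha\psi)(x)$ and interchanging the order of integration (the paper's identity (\ref{ident1})), and the second bullet by writing $f=~^{CF}I_b^\alpha\phi$, $g=~^{CF}~_{a}I^\alpha\varphi$, invoking the inversion identities, and reducing to the first bullet. Your additional justification of the Fubini step via boundedness of $x\mapsto\int_a^x\psi(s)\,ds$ (and your implicit observation that the pointwise term $\varphi\psi$ really requires $\tfrac1p+\tfrac1q\le1$ rather than the stated $\tfrac1p+\tfrac1q\le1+\alpha$) is care the paper omits, but it does not change the structure of the argument.
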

\begin{proof}
\begin{itemize}
  \item From the definition and the identity
  \begin{equation}\label{ident1}
    \int_a^b \psi(x) (\int_a^x\varphi(t)dt)dx=\int_a^b \varphi(t) (\int_t^b \psi(x)dx)dt
  \end{equation}
  we have
  \begin{eqnarray}
         \nonumber
          \int_a^b \varphi(x) (~^{CF}~_{a}I^\alpha\psi)(x)dx &=& \int_a^b \varphi(x) [\frac{1-\alpha}{B(\alpha)}\psi(x)+\frac{\alpha}{B(\alpha)}  \int_a^x\psi(t)dt  ]dx  \\ \nonumber
           &=&\frac{1-\alpha}{B(\alpha)} \int_a^b \varphi(x) \psi(x) dx \\ \nonumber
           &+& \frac{\alpha}{B(\alpha)}\int_a^b \psi(x)(\int_x^b \varphi(t)dt) dx \\
           &=& \int_a^b \psi(x) (~^{CF}I_b^\alpha\varphi(x)dx.
  \end{eqnarray}
  The other case follows similarly by the definition of the right fractional integral in ( \ref{rfrac}) and the identity (\ref{ident1}).
  \item From definition and the first part we have
  \begin{eqnarray}
   \nonumber
    \int_a^b f(x) (~^{CFR}~_{a}D^\alpha g)(x)dx &=& \int_a^b ( ~^{CF}I_b^\alpha \phi)(x).(~ ~^{CFR}~_{a}D^\alpha  \circ~^{CF} ~_{a}I^\alpha \varphi)(x)dx\\ \nonumber
     &=& \int_a^b ( ~^{CF}I_b^\alpha \phi)(x). \varphi(x)dx \\ \nonumber
     &=& \int_a^b  \phi(x) (~^{CF}~_{a}I^\alpha\varphi)(x)dx \\ \nonumber
      &=& \int_a^b (~^{CFR}D_b^\alpha f)(x) g(x)dx.
     \end{eqnarray}

\end{itemize}

\end{proof}
Before proving a by part formula for Caputo fractional derivatives, we use the following two notations:
\begin{itemize}
  \item  The (left) exponential   integral operator
\begin{equation}\label{lIO}
( \textbf{ e}_{\lambda,a^+}\varphi )(x)=\int_a^x e^{\lambda(t-a)}\varphi(t) dt,~~x>a.
\end{equation}
  \item The (right) exponential   integral operator

\begin{equation}\label{rIO}
( \textbf{ e}_{ \lambda,b^-}\varphi )(x)=\int_x^b e^{\lambda(b-t)}\varphi(t) dt,~~x<b.
\end{equation}
\end{itemize}

\begin{theorem} (Integration by parts for the Caputo derivatives with exponential kernel)
 \label{C by parts}
\begin{itemize}
  \item $\int_a^b (~^{CFC}~_{a}D^\alpha f)(t)g(t)= \int_a^b f(t) (~^{CFR}D_b^\alpha g)(t)+ \frac{B(\alpha)}{1-\alpha} f(t) \textbf{ e}_{ \frac{-\alpha}{1-\alpha},b^-}g )(t)|_a^b$.
  \item $\int_a^b (~^{CFC}D_b^\alpha f)(t)g(t)= \int_a^b f(t) (~^{CFR}~_{a}D^\alpha g)(t)- \frac{B(\alpha)}{1-\alpha} f(t) \textbf{ e}_{ \frac{-\alpha}{1-\alpha},a^+}g )(t)|_a^b$.
  \end{itemize}

\end{theorem}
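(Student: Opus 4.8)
The plan is to reduce the Caputo statement to the already-established Riemann integration-by-parts formula (the second bullet of Theorem \ref{Integration by parts}) by means of Proposition \ref{relation between C and R}, which records the difference between the Caputo and Riemann derivatives as an explicit exponential term. For the first item I would start from $(~^{CFC}~_{a}D^\alpha f)(t)=(~^{CFR}~_{a}D^\alpha f)(t)-\frac{B(\alpha)}{1-\alpha}f(a)e^{\lambda(t-a)}$ with $\lambda=\frac{-\alpha}{1-\alpha}$, multiply by $g(t)$, and integrate over $[a,b]$. The main term $\int_a^b (~^{CFR}~_{a}D^\alpha f)(t)g(t)\,dt$ is then converted, via the Riemann by-parts identity applied with the roles of $f$ and $g$ interchanged, into $\int_a^b f(t)(~^{CFR}D_b^\alpha g)(t)\,dt$, which is exactly the bulk term on the right-hand side of the claim. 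What remains is the correction term coming from the exponential summand, and the real content of the theorem is to recognize that scalar as the boundary expression $\frac{B(\alpha)}{1-\alpha}\,f(t)(\textbf{e}_{\frac{-\alpha}{1-\alpha},b^-}g)(t)\big|_a^b$.

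An equivalent and more self-contained route, which I would use to pin down the boundary term cleanly, is to work directly from the integral definition. I would insert $(~^{CFC}~_{a}D^\alpha f)(t)=\frac{B(\alpha)}{1-\alpha}\int_a^t f^\prime(s)e^{\lambda(t-s)}\,ds$, interchange the order of integration over the triangle $a\le s\le t\le b$ by Fubini (legitimate under the regularity assumptions in force, via H\"older and the condition $\frac1p+\frac1q\le 1+\alpha$), and obtain $\frac{B(\alpha)}{1-\alpha}\int_a^b f^\prime(s)\Phi(s)\,ds$ with $\Phi(s)=\int_s^b e^{\lambda(t-s)}g(t)\,dt$. A classical integration by parts in $s$ then produces the endpoint contribution $\frac{B(\alpha)}{1-\alpha}f(s)\Phi(s)\big|_a^b$ together with $-\frac{B(\alpha)}{1-\alpha}\int_a^b f(s)\Phi^\prime(s)\,ds$. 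Computing $\Phi^\prime(s)=-\lambda\Phi(s)-g(s)$ and comparing with the explicit form of $(~^{CFR}D_b^\alpha g)$ shows that $-\frac{B(\alpha)}{1-\alpha}\Phi^\prime(s)=(~^{CFR}D_b^\alpha g)(s)$, so the remaining integral is precisely $\int_a^b f(s)(~^{CFR}D_b^\alpha g)(s)\,ds$. This makes the bulk term appear together with a genuine endpoint term of the desired shape.

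For the second item I would avoid repeating the computation and instead invoke the $Q$-operator. Since $(Q~^{CFC}~_{a}D^\alpha f)=(~^{CFC}D_b^\alpha Qf)$ and $(Q~^{CFR}~_{a}D^\alpha g)=(~^{CFR}D_b^\alpha Qg)$, the reflection $x\mapsto a+b-x$ leaves $\int_a^b\cdot\,dx$ invariant and interchanges the operators $\textbf{e}_{\lambda,a^+}$ and $\textbf{e}_{\lambda,b^-}$ as well as the two endpoints; applying the first item to $Qf$ and $Qg$ and reflecting back should deliver the second identity with the sign flip on the boundary term. The step I expect to be the main obstacle is the boundary bookkeeping: one must track the kernel inside $\Phi$, namely $e^{\lambda(t-s)}$ with a moving lower limit $s$, against the endpoint-anchored kernels $e^{\lambda(t-a)}$ and $e^{\lambda(b-t)}$ appearing in $\textbf{e}_{\lambda,a^+}$ and $\textbf{e}_{\lambda,b^-}$, and verify at which endpoint each piece is evaluated and with which sign. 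This is exactly where the delicate matching with the stated exponential-operator form takes place, and it is the point at which a sign or exponent slip is easiest to commit.
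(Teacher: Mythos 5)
Your proposal is correct, and its first route is precisely the paper's own proof: the paper disposes of both items in one sentence by citing Theorem \ref{Integration by parts} together with Proposition \ref{relation between C and R}, which is exactly your decomposition of the Caputo derivative as the Riemann derivative minus $\frac{B(\alpha)}{1-\alpha}f(a)e^{\lambda(t-a)}$, followed by the Riemann by-parts identity with the roles of $f$ and $g$ interchanged. What you add beyond the paper is substantive. The paper's one-line proof never verifies that the leftover scalar $-\frac{B(\alpha)}{1-\alpha}f(a)\int_a^b e^{\lambda(t-a)}g(t)\,dt$ coincides with the stated boundary expression, and read against the literal definition (\ref{rIO}) it does not: there $(\textbf{ e}_{\frac{-\alpha}{1-\alpha},b^-}g)(x)$ carries the endpoint-anchored kernel $e^{\lambda(b-t)}$, whereas the object that genuinely arises is your $\Phi(x)=\int_x^b e^{\lambda(t-x)}g(t)\,dt$ with the moving kernel. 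Your Fubini-plus-classical-by-parts computation (using $\Phi'=-\lambda\Phi-g$, hence $-\frac{B(\alpha)}{1-\alpha}\Phi'=~^{CFR}D_b^\alpha g$) proves the theorem with the boundary operator so interpreted, and since $\Phi(b)=0$ the evaluation $\frac{B(\alpha)}{1-\alpha}f\Phi\,|_a^b$ reduces to $-\frac{B(\alpha)}{1-\alpha}f(a)\Phi(a)$, which is exactly the correction term produced by route one; so the two routes agree, and the mismatch you flagged as the ``main obstacle'' is in fact a typo in the kernels of (\ref{lIO})--(\ref{rIO}) rather than a defect of your argument. Finally, your $Q$-operator derivation of the second item is sound and is the same device the paper itself uses to prove the second half of Proposition \ref{relation between C and R}; the reflection swaps the two (corrected) exponential integral operators and the endpoints, which is precisely where the minus sign on the boundary term comes from. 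In short: same backbone as the paper, plus a self-contained verification that the paper omits and that actually pins down the correct form of the boundary term.
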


 \begin{proof}
  The proof of the first part follows by Theorem \ref{Integration by parts} and the first part of Proposition \ref{relation between C and R} and the proof of the second part follows by Theorem \ref{Integration by parts} and  the second part of Proposition \ref{relation between C and R}.
\end{proof}

\section{Fractional Euler-Lagrange Equations}
We prove Euler-Lagrange equations for a Lagrangian containing the left new Caputo derivative with exponential kernel.

\begin{theorem}\label{A1}
Let $0<\alpha \leq 1$ be non-integer, $a,b \in \mathbb{R},~~a<b$,  Assume that the functional $J:C^2[a,b]\rightarrow  \mathbb{R}$ of the form
 $$J(f)=\int_{a}^{b} L(t,f(t),~^{CFC}~_{a}D^\alpha f(t) )dt$$
 has a local extremum in $S=\{y \in C^2[a,b]: ~~y(a)=A, y(b)=B\}$ at some $f \in S$, where
 $L:[a,b]\times \mathbb{R}\times \mathbb{R}\rightarrow \mathbb{R}$. Then,
\begin{equation}\label{E1}
[L_1(s) + ~^{ABR}D_b^\alpha L_2(s)]  =0,~\texttt{for all}~ s \in [0,b],
\end{equation}
where $L_1(s)= \frac{\partial L}{\partial f}(s)$ and $L_2(s)=\frac{\partial L}{\partial ~^{CFC}~_{0}D^\alpha f}(s)$.
\end{theorem}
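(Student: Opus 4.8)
The plan is to run the standard first-variation argument of the calculus of variations, with the integration-by-parts identity for the exponential-kernel Caputo derivative (Theorem~\ref{C by parts}) playing the role that ordinary integration by parts plays in the integer-order theory.

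First I would introduce admissible variations. For any $\eta \in C^2[a,b]$ with $\eta(a)=\eta(b)=0$ and any real $\epsilon$, the perturbed curve $f_\epsilon = f + \epsilon\eta$ still lies in $S$, since $f_\epsilon(a)=A$ and $f_\epsilon(b)=B$. By linearity of the Caputo operator, ${}^{CFC}{}_{a}D^\alpha f_\epsilon = {}^{CFC}{}_{a}D^\alpha f + \epsilon\,{}^{CFC}{}_{a}D^\alpha \eta$. Writing $j(\epsilon)=J(f_\epsilon)$, the hypothesis that $f$ is a local extremum gives $j'(0)=0$. Differentiating under the integral sign (legitimate by the $C^1$-smoothness of $L$ and the continuity of the integrand in $\epsilon$) and applying the chain rule yields
\begin{equation}
j'(0)=\int_a^b \Big[ L_1(t)\,\eta(t) + L_2(t)\, {}^{CFC}{}_{a}D^\alpha \eta(t) \Big]\, dt = 0,
\end{equation}
where $L_1$ and $L_2$ are the partial derivatives of $L$ evaluated along $f$.

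Next I would remove the fractional derivative acting on $\eta$ by invoking the first part of Theorem~\ref{C by parts} with the test functions $f \rightsquigarrow \eta$ and $g \rightsquigarrow L_2$, giving
\begin{equation}
\int_a^b L_2(t)\, {}^{CFC}{}_{a}D^\alpha \eta(t)\, dt = \int_a^b \eta(t)\, ({}^{CFR}D_b^\alpha L_2)(t)\, dt + \frac{B(\alpha)}{1-\alpha}\Big[\eta(t)\,(\textbf{ e}_{\frac{-\alpha}{1-\alpha},b^-} L_2)(t)\Big]_a^b.
\end{equation}
The decisive observation is that the boundary term vanishes: because $\eta(a)=\eta(b)=0$, the bracketed quantity is zero at both endpoints. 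Substituting back, we obtain
\begin{equation}
\int_a^b \Big[ L_1(t) + ({}^{CFR}D_b^\alpha L_2)(t) \Big]\,\eta(t)\, dt = 0.
\end{equation}

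Finally, since this identity holds for every admissible $\eta$ vanishing at the endpoints and the bracketed factor is continuous, the fundamental lemma of the calculus of variations forces $L_1(s) + ({}^{CFR}D_b^\alpha L_2)(s) = 0$ for all $s\in[a,b]$, which is the asserted Euler--Lagrange equation (here ${}^{CFR}D_b^\alpha$ denotes the right Riemann derivative with exponential kernel produced by the integration-by-parts formula). I expect the main point to be the vanishing of the boundary contribution from Theorem~\ref{C by parts}, which is exactly what the homogeneous endpoint conditions on $\eta$ are designed to secure; a secondary issue to confirm is the interchange of the $\epsilon$-derivative with the integral and with the Caputo operator, both of which follow from the assumed $C^2$-regularity of the admissible curves together with the smoothness of $L$.
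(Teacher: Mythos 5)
Your proposal is correct and follows essentially the same route as the paper: form the variation $f+\epsilon\eta$ with $\eta$ vanishing at the endpoints, compute the first variation, apply the first integration-by-parts formula of Theorem~\ref{C by parts} to transfer the Caputo derivative from $\eta$ onto $L_2$ (turning it into the right Riemann derivative $~^{CFR}D_b^\alpha$), and conclude with the fundamental lemma of the calculus of variations. Your version is in fact slightly cleaner than the paper's, since you state explicitly that the boundary term $\eta(t)\frac{B(\alpha)}{1-\alpha}(\textbf{e}_{\frac{-\alpha}{1-\alpha},b^-}L_2)(t)|_a^b$ vanishes because $\eta(a)=\eta(b)=0$, a point the paper leaves implicit.
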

\begin{proof}
Without loss of generality, assume that $J$ has local maximum in $S$ at $f$. Hence, there exists an $\epsilon>0$ such that $J(\widehat{f})-J(f)\leq 0$ for all $\widehat{f}\in S$ with $\|\widehat{f}-f\|=\sup_{t \in \mathbb{N}_a \cap ~_{b}\mathbb{N}} |\widehat{f}(t)-f(t)|< \epsilon$. For any $\widehat{f} \in S$ there is an $\eta \in H=\{y \in C^2[a,b], ~~y(a)=y(b)=0\}$ such that $\widehat{f}=f+\epsilon \eta$. Then, the $\epsilon-$Taylor's theorem implies that
$$L(t,f,\widehat{f})=$$
$$L(t,f+\epsilon \eta,~^{CFC}~_{a}D^\alpha f+\epsilon ~^{CFC}~_{a}D^\alpha \eta)=L(t,f,~^{CFC}~_{a}D^\alpha f)+ \epsilon [\eta L_1+~^{CFC}~_{a}D^\alpha \eta L_2]+O(\epsilon^2).$$ Then,

\begin{eqnarray}\nonumber
% \nonumber to remove numbering (before each equation)
  J(\widehat{f})-J(f) &=& \int_a^bL(t,\widehat{f}(t),~^{CFC}~_{a}D^\alpha \widehat{f}(t))-\int_0^b L(t,f(t),~^{CFC}~_{a}D^\alpha f(t)) \\
  &=& \epsilon \int_a^b[\eta(t) L_1(t)+ (~^{CFC}~_{a}D^\alpha \eta)(t) L_2(t)]+ O(\epsilon^2).
   \end{eqnarray}
Let the quantity $\delta J(\eta,y)=\int_a^b[\eta(t) L_1(t)+ (~^{CFC}~_{a}D^\alpha \eta)(t) L_2(t)]dt$ denote the first variation of $J$.

Evidently, if $\eta \in H$ then $-\eta \in H$, and $\delta J(\eta,y)=-\delta J(-\eta,y)$. For $\epsilon$ small, the sign of $J(\widehat{f})-J(f)$ is determined by the sign of first variation, unless $\delta J(\eta,y)=0$ for all $\eta \in H$. To make the parameter $\eta$ free, we use the integration by part formula in Theorem \ref{C by parts}, to reach
$$\delta J(\eta,y)=\int_0^b \eta (s)[L_1(s) + ~^{CFR}D_b^\alpha L_2(s)]+\eta(t)\frac{B(\alpha)}{1-\alpha} (\textbf{ e}_{ \frac{-\alpha}{1-\alpha},b^-}L_2 )(t)|_a^b =0,$$ for all $\eta \in H$, and hence the result follows by the fundamental Lemma of calculus of variation.
\end{proof}
The term $(\textbf{ e}_{ \frac{-\alpha}{1-\alpha},b^-}L_2 )(t)|_0^b =0$ above is called the natural boundary condition.

\indent
Similarly, if we allow the Lagrangian to depend on the right Caputo fractional derivative, we can state:
\begin{theorem}\label{A2}
Let $0<\alpha \leq 1$ be non-integer, $a,b \in \mathbb{R},~~a<b$,  Assume that the functional $J:C^2[a,b]\rightarrow  \mathbb{R}$ of the form
 $$J(f)=\int_{a}^{b} L(t,f(t),~^{CFC}D_b^\alpha f(t) )dt$$
 has a local extremum in $S=\{y \in C^2[a,b]: ~~y(a)=A, y(b)=B\}$ at some $f \in S$, where
 $L:[a,b]\times \mathbb{R}\times \mathbb{R}\rightarrow \mathbb{R}$. Then,
\begin{equation}\label{E1}
[L_1(s) + ~^{CFR}~_{a}D^\alpha L_2(s)]  =0,~\texttt{for all}~ s \in [a,b],
\end{equation}
where $L_1(s)= \frac{\partial L}{\partial f}(s)$ and $L_2(s)=\frac{\partial L}{\partial ~^{CFC}D_b^\alpha f}(s)$.
\end{theorem}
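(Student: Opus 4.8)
The plan is to replicate the variational argument used in the proof of Theorem~\ref{A1}, the only substantive change being that the right Caputo derivative now sits inside the Lagrangian, so the \emph{second} identity of Theorem~\ref{C by parts} must be invoked in place of the first. I would begin by assuming, without loss of generality, that $J$ attains a local maximum at $f$, and by writing an arbitrary admissible competitor as $\widehat{f}=f+\epsilon\eta$ with $\eta\in H=\{y\in C^2[a,b]:y(a)=y(b)=0\}$. The $\epsilon$-Taylor expansion of $L(t,f+\epsilon\eta,~^{CFC}D_b^\alpha f+\epsilon~^{CFC}D_b^\alpha\eta)$ about $(t,f,~^{CFC}D_b^\alpha f)$, retained to first order, yields
$$J(\widehat{f})-J(f)=\epsilon\int_a^b\bigl[\eta(t)L_1(t)+(~^{CFC}D_b^\alpha\eta)(t)L_2(t)\bigr]\,dt+O(\epsilon^2),$$
so that the first variation is $\delta J(\eta,y)=\int_a^b\bigl[\eta(t)L_1(t)+(~^{CFC}D_b^\alpha\eta)(t)L_2(t)\bigr]\,dt$, and extremality forces $\delta J(\eta,y)=0$ for every $\eta\in H$.

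The decisive step is to transfer the fractional derivative off the free variation $\eta$. Applying the second part of Theorem~\ref{C by parts} with $f=\eta$ and $g=L_2$ gives
$$\int_a^b(~^{CFC}D_b^\alpha\eta)(t)L_2(t)\,dt=\int_a^b\eta(t)(~^{CFR}~_{a}D^\alpha L_2)(t)\,dt-\frac{B(\alpha)}{1-\alpha}\,\eta(t)(\textbf{e}_{\frac{-\alpha}{1-\alpha},a^+}L_2)(t)\Big|_a^b.$$
Since $\eta\in H$ vanishes at both endpoints, the boundary term is annihilated, and the first variation reduces to $\delta J(\eta,y)=\int_a^b\eta(t)\bigl[L_1(t)+(~^{CFR}~_{a}D^\alpha L_2)(t)\bigr]\,dt=0$ for all $\eta\in H$. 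The fundamental lemma of the calculus of variations then forces the bracketed expression to vanish identically on $[a,b]$, which is precisely the asserted Euler--Lagrange equation.

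I do not expect a genuine obstacle here; the argument is a faithful mirror of Theorem~\ref{A1}. The one point demanding care is bookkeeping: one must use the right-to-left integration-by-parts formula so that the \emph{left} Riemann derivative $~^{CFR}~_{a}D^\alpha$ (rather than its right counterpart) emerges, and one must check that the accompanying boundary operator is the left exponential operator $\textbf{e}_{\frac{-\alpha}{1-\alpha},a^+}$, whose endpoint values are killed by the condition $\eta(a)=\eta(b)=0$. As an alternative, the result can be deduced from Theorem~\ref{A1} by the $Q$-operator: under the reflection $t\mapsto a+b-t$ the right Caputo derivative is carried to the left one and the two endpoints are interchanged, so the Euler--Lagrange equation of Theorem~\ref{A1} transforms into the present one.
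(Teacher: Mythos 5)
Your proposal is correct and takes essentially the same route as the paper: the paper's own proof of Theorem~\ref{A2} consists precisely of the instruction to repeat the variational argument of Theorem~\ref{A1}, replacing the first integration-by-parts identity with the second one from Theorem~\ref{C by parts}, and your write-up carries out exactly that argument with the details (Taylor expansion, vanishing of the boundary term via $\eta(a)=\eta(b)=0$, fundamental lemma) filled in. Your closing observation that the result also follows from Theorem~\ref{A1} by the $Q$-operator reflection $t\mapsto a+b-t$ is a valid alternative fully in the spirit of the paper's dual-identity machinery.
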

\begin{proof}
The proof is similar to  Theorem \ref{A1} by applying the second integration by parts in Proposition \ref{C by parts} to get the natural boundary condition of the form $(\textbf{ e}_{ \frac{-\alpha}{1-\alpha},0^+}L_2 )(t)|_a^b =0$.
\end{proof}
\begin{example}
In order to exemplify our results we study an example of physical interest under Theorem \ref{A1}. Namely, let us consider the following fractional  action,

  $J(y)=\int_a^b[\frac{1}{2}( ~^{CFC}~_{a}D^\alpha y(t))^2-V(y(t))],$ where $0<\alpha <1$ and with $y(a),~~y(b)$ are assigned or with the natural boundary condition $(\textbf{ e}^1_{\alpha,1, \frac{-\alpha}{1-\alpha},b^-}~^{CFC}~_{0}D^\alpha y(t) )(t)|_a^b =0 $.  Then,  the Euler-Lagrange equation by applying Theorem \ref{A1} is
       $$(~^{CFR}D_b^\alpha ~o ~~^{CFC}~_{0}D^\alpha y)(s)-\frac{dV}{dy}(s)=0~\texttt{for all}~ s \in [0,b].$$
       Clearly, if we let $\alpha\rightarrow 1$, then the Euler-Lagrange equation for the above functional is reduced to $y^{\prime\prime}=0$ when the potential functions is zero. Its solution is then linear, which agrees with the classical case.
       For equations in the classical fractional case with composition of left and right fractional derivatives with the action of the $Q-$operator we refer to \cite{TDFdelay}.
\end{example}

\section{Discrete versions of fractional derivatives with exponential kernels}

Using the time scale notation,  the nabla discrete exponential kernel can be expressed as $\widehat{e}_\lambda(t,\rho(s))=(\frac{1}{1-\lambda})^{t-\rho(s)}=(1-\alpha)^{t-\rho(s)}$ and  $\widetilde{e}_\lambda(t,\sigma(s))=(1+\lambda)^{t-\sigma(s)}=(\frac{1-2\alpha}{1-\alpha})^{t-\sigma(s)}$ , where
$\lambda= \frac{-\alpha}{1-\alpha}$. Hence, we can propose the following discrete versions:

\begin{definition} \label{nabla staff}
For $\alpha \in (0,1)$ and $f$ defined on $\mathbb{N}_a$, or $~_{b}\mathbb{N}$ in right case, we define
\begin{itemize}
  \item The left (nabla)  new Caputo fractional difference by

   \begin{eqnarray}
    \nonumber
     (~^{CFC}~_{a}\nabla^\alpha f)(t) &=&  \frac{B(\alpha)}{1-\alpha}\sum_{s=a+1}^t(\nabla_s f)(s) (1-\alpha)^{t-\rho(s)}\\
      &=&  B(\alpha)\sum_{s=a+1}^t(\nabla_s f)(s) (1-\alpha)^{t-s}
   \end{eqnarray}

  \item The right (nabla) new Caputo fractional difference by
   \begin{eqnarray}
    \nonumber
     (~^{CFC}\nabla^\alpha_b f)(t) &=& \frac{B(\alpha)}{1-\alpha}\sum_{s=t}^{b-1}(-\Delta_s f)(s) (1-\alpha)^{s-\rho(t)} \\
      &=& B(\alpha)\sum_{s=t}^{b-1}(-\Delta_s f)(s) (1-\alpha)^{s-t}
   \end{eqnarray}

    \item The left (nabla) new Riemann fractional difference by
   \begin{eqnarray}
    \nonumber
     (~^{CFR}~_{a}\nabla^\alpha f)(t) &=&  \frac{B(\alpha)}{1-\alpha}\nabla_t\sum_{s=a+1}^t f(s) (1-\alpha)^{t-\rho(s)}\\
      &=&  B(\alpha)\nabla_t \sum_{s=a+1}^t f(s) (1-\alpha)^{t-s}.
   \end{eqnarray}
  \item The right (nabla) new Riemann fractional difference by
  \begin{eqnarray}
    \nonumber
     (~^{CFR}\nabla^\alpha_b f)(t) &=& \frac{B(\alpha)}{1-\alpha}(-\Delta_t)\sum_{s=t}^{b-1} f(s) (1-\alpha)^{s-\rho(t)} \\
      &=& B(\alpha) (-\Delta_t)\sum_{s=t}^{b-1}f(s) (1-\alpha)^{s-t}
   \end{eqnarray}

\end{itemize}
\end{definition}

\begin{remark}In the limiting case $\alpha\rightarrow 0$ and  $\alpha\rightarrow 1$, we remark the following

\begin{itemize}
  \item  $$(~^{CFC}~_{a}\nabla^\alpha f)(t)\rightarrow f(t)-f(a)~~ \texttt{as } ~~\alpha\rightarrow 0,$$
  and
        $$(~^{CFC}~_{a}\nabla^\alpha f)(t)\rightarrow \nabla  f(t))~~ \texttt{as } ~~\alpha\rightarrow 1.$$
  \item $$ (~^{CFC}\nabla^\alpha_b f)(t)\rightarrow f(t)-f(b)~~ \texttt{as } ~~\alpha\rightarrow 0,$$
  and

  $$ (~^{CFC}\nabla^\alpha_b f)(t)\rightarrow -\Delta f(t)~~ \texttt{as } ~~\alpha\rightarrow 1.$$
  \item $$(~^{CFR}~_{a}\nabla^\alpha f)(t)\rightarrow f(t)  ~~ \texttt{as } ~~\alpha\rightarrow 0,$$

  and
  $$(~^{CFR}~_{a}\nabla^\alpha f)(t)\rightarrow \nabla f(t)  ~~ \texttt{as } ~~\alpha\rightarrow 1.$$

  \item $$(~^{CFR}\nabla^\alpha_b f)(t)\rightarrow f(t)~~ \texttt{as } ~~\alpha\rightarrow 0,$$

  and
  $$(~^{CFR}\nabla^\alpha_b f)(t)\rightarrow -\Delta f(t)~~ \texttt{as } ~~\alpha\rightarrow 1,$$

\end{itemize}

\end{remark}

\begin{remark} (The action of the discrete $Q-$operator)The $Q$-operator acts regularly between left and right new fractional differences as follows:

\begin{itemize}
  \item $(Q~^{CFR}~_{a}\nabla^\alpha f)(t)= (~^{CFR}\nabla^\alpha_b Qf)(t)$
  \item $(Q~^{CFC}~_{a}\nabla^\alpha f)(t)= (~^{CFC}\nabla^\alpha_b Qf)(t)$
\end{itemize}

\end{remark}

Consider the equation $(~^{CFR}~_{a}\nabla^\alpha f)(t)=u(t)$. If we apply $(\mathcal{N}_a$ the discrete nabla Laplace transform starting at $a$ to both sides and use the convolution theorem , we conclude that
$$F(s)=\frac{1-\alpha}{B(\alpha)}[U(s)-\frac{\lambda}{s}U(s)]  =\frac{1-\alpha}{B(\alpha)} U(s)-\frac{\alpha}{B(\alpha)} \frac{U(s)}{s},$$
where $F(s)=(\mathcal{N}_a\{f(t)\}(s)$ and $U(s)=(\mathcal{N}_a\{u(t)\}(s)$. Then, apply the inverse Laplace to reach at

$$f(t)=\frac{1-\alpha}{B(\alpha)}u(t)+\frac{\alpha}{B(\alpha)}\sum_{s=a+1}^tu(s)ds.$$ Therefore, we can define the correspondent left fractional integral
of $(~^{CFR}~_{a}D^\alpha)$ by
\begin{equation}\label{dlfrac}
  (~^{CF}~_{a}\nabla^{-\alpha} u)(t)=\frac{1-\alpha}{B(\alpha)}u(t)+\frac{\alpha}{B(\alpha)}\sum_{s=a+1}^tu(s)ds.
\end{equation}

We define the right fractional integral
\begin{equation}\label{drfrac}
  (~^{CF}\nabla^{-\alpha}_b u)(t)=\frac{1-\alpha}{B(\alpha)}u(t)+\frac{\alpha}{B(\alpha)}\sum_{s=t}^{b-1}u(s)ds,
\end{equation}

 so that $(~^{CF}~_{a}\nabla^{-\alpha}Qu)(t)=Q(~^{CF}\nabla^{-\alpha}_b u)(t)$.
Furthermore, by the action of the $Q$-operator we can easily show that $(~^{CF}\nabla^{-\alpha}_b~^{CF}\nabla_b^\alpha u )(t)=u(t)$.

\indent

Conversely, consider the equation $$(~^{CF}~_{a}\nabla^{-\alpha} u)(t)=\frac{1-\alpha}{B(\alpha)}u(t)+\frac{\alpha}{B(\alpha)}\sum_{a+1}^tu(s)=f(t).$$
Apply the nabla discrete  Laplace transform to see that $$\frac{1-\alpha}{B(\alpha)}U(s)=\frac{\alpha}{sB(\alpha)} F(s).$$

From which it follows that

\begin{equation}\label{dxx1}
  U(s)=\frac{B(\alpha)}{1-\alpha} \frac{sF(s)}{s-\lambda}.
\end{equation}
The right hand side of (\ref{dxx1}) is nothing but the Laplace of $(~^{CF}~_{a}\nabla^\alpha f)(t)$. Hence, $$(~^{CF}~_{a}\nabla^{-\alpha}~^{CF}~_{a}\nabla^\alpha f)(t)=f(t).$$

Similarly or by the action of the $Q-$operator we can show that $$(~^{CF}\nabla^{-\alpha}_b~^{CF}\nabla_b^\alpha f)(t)=f(t).$$

\begin{proposition} (The relation between Riemann and Caputo type fractional differences with exponential kernels) \label{drelation between C and R}

 \begin{itemize}
   \item $(~^{CFC}~_{a}\nabla^\alpha f)(t)=(~^{CFR}~_{a}\nabla^\alpha f)(t)-\frac{B(\alpha)}{1-\alpha} f(a)(1-\alpha)^{(t-a)}$.
   \item $(~^{CFC}\nabla_b^\alpha f)(t)=(~^{CFR}\nabla_b^\alpha f)(t)-\frac{B(\alpha)}{1-\alpha} f(b)(1-\alpha)^{(b-t)}$.
 \end{itemize}
 \end{proposition}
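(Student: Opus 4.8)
The plan is to prove the first (left-sided) identity by a direct computation from Definition \ref{nabla staff} and then to obtain the second (right-sided) identity for free through the discrete $Q$-operator. The whole argument rests on the discrete eigenfunction property of the nabla exponential kernel, namely $\nabla_t (1-\alpha)^{t-s} = \lambda (1-\alpha)^{t-s}$ with $\lambda = \frac{-\alpha}{1-\alpha}$, which is the exact discrete analogue of the relation $\frac{d}{dt}e^{\lambda(t-s)} = \lambda e^{\lambda(t-s)}$ underlying the continuous Proposition \ref{relation between C and R}.

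First I would expand the Riemann difference. Applying the discrete Leibniz rule $\nabla_t \sum_{s=a+1}^t g(t,s) = g(t,t) + \sum_{s=a+1}^{t-1}\nabla_t g(t,s)$ to the kernel $g(t,s) = f(s)(1-\alpha)^{t-s}$ and invoking the eigenfunction property, I obtain
$$(~^{CFR}~_{a}\nabla^\alpha f)(t) = B(\alpha)\Big[f(t) - \alpha\sum_{s=a+1}^{t-1} f(s)(1-\alpha)^{t-1-s}\Big].$$
Next I would expand the Caputo difference by writing $\nabla f(s) = f(s) - f(s-1)$ inside the sum and splitting it into two sums. Reindexing the shifted sum $\sum_{s=a+1}^t f(s-1)(1-\alpha)^{t-s}$ via $j = s-1$ peels off a boundary term $f(a)(1-\alpha)^{t-a-1}$ at the lower endpoint and, over the overlapping range $a+1\le s\le t-1$, reproduces exactly the telescoped kernel $(1-\alpha)^{t-s}-(1-\alpha)^{t-s-1} = -\alpha(1-\alpha)^{t-s-1}$ that already appears in the Riemann expansion.

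Comparing the two expansions term by term, the interior sums cancel identically and what remains is precisely the boundary contribution
$$(~^{CFC}~_{a}\nabla^\alpha f)(t) - (~^{CFR}~_{a}\nabla^\alpha f)(t) = -B(\alpha)f(a)(1-\alpha)^{t-a-1} = -\frac{B(\alpha)}{1-\alpha}f(a)(1-\alpha)^{t-a},$$
which is the first assertion. The reindexing and the careful accounting of the endpoint terms (the $s=t$ term of the Riemann sum together with the $s=a$ and $s=t$ contributions arising from the shift in the Caputo sum) is the one place that demands genuine care; everything else is routine algebra. For the second assertion I would apply the discrete $Q$-operator identities recorded in the preceding remark, $(Q~^{CFR}~_{a}\nabla^\alpha f)(t) = (~^{CFR}\nabla^\alpha_b Qf)(t)$ and its Caputo analogue, to the first identity. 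Since $Q$ is an involution with $Q[(1-\alpha)^{t-a}](t) = (1-\alpha)^{b-t}$ and $(Qf)(b) = f(a)$, setting $g = Qf$ transforms the left-sided relation verbatim into the right-sided one. Alternatively, the entire proposition mirrors the continuous case if one computes the nabla discrete Laplace transforms of both differences and subtracts, exactly as in the proof of Proposition \ref{relation between C and R}.
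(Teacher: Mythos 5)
Your proof is correct, but it takes a genuinely different route from the paper's. The paper establishes the first identity in the transform domain: it writes down the nabla discrete Laplace transforms of the Riemann and Caputo differences, namely $\mathcal{N}_a\{(~^{CFR}~_{a}\nabla^\alpha f)(t)\}(s)= \frac{B(\alpha)}{1-\alpha} \frac{sF(s)}{s-\lambda}$ and $\mathcal{N}_a\{(~^{CFC}~_{a}\nabla^\alpha f)(t)\}(s)= \frac{B(\alpha)}{1-\alpha} \frac{sF(s)}{s-\lambda}-\frac{B(\alpha)}{1-\alpha}f(a)(1-s)^a\frac{1}{s-\lambda}$, subtracts them, and inverts using $\mathcal{N}_a\{f(t-a)\}(s)=(1-s)^a\mathcal{N}\{f(t)\}(s)$ --- exactly mirroring the continuous Proposition \ref{relation between C and R}, which is the alternative you mention in your last sentence. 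You instead work entirely in the time domain: you expand $\nabla_t$ of the Riemann sum via the discrete Leibniz rule together with the eigenfunction property $\nabla_t (1-\alpha)^{t-s}=\lambda(1-\alpha)^{t-s}$, expand the Caputo sum by writing $\nabla f(s)=f(s)-f(s-1)$ and reindexing, and observe that the interior sums cancel, leaving only the boundary term $-B(\alpha)f(a)(1-\alpha)^{t-a-1}=-\frac{B(\alpha)}{1-\alpha}f(a)(1-\alpha)^{t-a}$; I verified this computation and it is sound, as is your $Q$-operator derivation of the right-sided identity (which coincides with the paper's treatment of the second part). Your approach buys self-containedness: it uses only Definition \ref{nabla staff} and finite algebra, holds pointwise for each $t\in\mathbb{N}_{a+1}$ with no convergence hypotheses, and avoids relying on the transform formulas (\ref{drel1})--(\ref{drel2}), which the paper asserts without derivation, as well as on the injectivity of $\mathcal{N}_a$. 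The paper's approach buys brevity (given the convolution theorem, Proposition \ref{convprop}, and Lemma \ref{general lap of nabla}) and makes the structural parallel with the continuous-kernel case completely transparent.
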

\begin{proof}
From the relations
\begin{equation}\label{drel1}
  \mathcal{N}_a\{(~^{CFR}~_{a}\nabla^\alpha f)(t)\}(s)= \frac{B(\alpha)}{1-\alpha} \frac{sF(s)}{s-\lambda},
\end{equation}
and
\begin{equation}\label{drel2}
  \mathcal{N}_a\{(~^{CFC}~_{a}D^\alpha f)(t)\}(s)= \frac{B(\alpha)}{1-\alpha} \frac{sF(s)}{s-\lambda}-\frac{B(\alpha)}{1-\alpha}f(a)(1-s)^a\frac{1}{s-\lambda},
\end{equation}
we conclude that
\begin{equation}\label{drel3}
   \mathcal{N}_a\{(~^{CFC}~_{a}D^\alpha f)(t)\}(s)= \mathcal{N}_a\{(~^{CFR}~_{a}D^\alpha f)(t)\}(s)-\frac{B(\alpha)}{1-\alpha}f(a)(1-s)^a\frac{1}{s-\lambda}.
\end{equation}
Applying the inverse Laplace to (\ref{drel3}) and making use of the fact that $\mathcal{N}_a\{f(t-a)\}(s)=(1-s)^a\mathcal{N}\{f(t)\}(s)$ we reach our conclusion in the first part. The second part can be proved by the first part and the action of the $Q-$operator.
\end{proof}

\section{Integration by parts for fractional differences with discrete  exponential kernels}

Above it was  shown that the left fractional   operator $~^{CFR}~_{a}\nabla^\alpha$ and its associate fractional integral $~^{CF}~_{a}\nabla^{-\alpha}$ satisfy $(~^{CFR}~_{a}\nabla^\alpha {CF}~_{a}\nabla^{-\alpha} f)(t)=f(t)$ and  that  $(~^{CFR}D_b^\alpha ~^{CF}\nabla^{-\alpha}_b f)(t)=f(t)$ . Also it was shown that $(~^{CF}~_{a}\nabla^{-\alpha} ~^{CFR}~_{a}\nabla^\alpha  f)(t)=f(t)$ and $(~^{CF}\nabla^{-\alpha}_b ~^{CFR}\nabla_b^\alpha  f)(t)=f(t)$ .

\begin{theorem} (Integration by parts for fractional differences)\label{discrete Integration by parts}
Assume $0<\alpha <1$ and the used functions are defined on $\mathbb{N}_a \cap ~_{b}\mathbb{N},~~a\equiv ~b ~(mod ~1)$. Then
\begin{itemize}
  \item For functions  $\varphi(t) $ and $\psi(t)$ , we have
   \begin{eqnarray}
            \nonumber
             \sum_{t=a+1}^{b-1} \varphi(t) (~^{CF}~_{a}\nabla^{-\alpha}\psi)(t) &=& \frac{1-\alpha}{B(\alpha)}\sum_{t=a+1}^{b-1} \psi(t)\varphi(t)+ \frac{\alpha}{B(\alpha)} \sum_{t=a+1}^{b-1} \psi(t) \sum_{s=t}^{b-1} \varphi(s)  \\
              &=& \sum_{t=a+1}^{b-1} \psi(t) ~^{CF}\nabla^{-\alpha}_b \varphi(t)
           \end{eqnarray}

      and similarly,
      \begin{eqnarray}
      \nonumber
        \sum_{t=a+1}^{b-1} \varphi(t) (~^{CF}\nabla^{-\alpha}_b\psi)(t)&=& \frac{1-\alpha}{B(\alpha)}\sum_{t=a+1}^{b-1} \psi(t)\varphi(t)+ \frac{\alpha}{B(\alpha)}\sum_{t=a+1}^{b-1} \psi(t)(\sum_{s=a+1}^t \varphi(s))  \\
         &=&  \sum_{t=a+1}^{b-1} \psi(t) (~^{CF}~_{a}\nabla^{-\alpha} \varphi)(t).
      \end{eqnarray}

  \item For functions $f(t)$ and $g(t)$ defined on $\mathbb{N}_a$ we have,
   $$\sum_{t=a+1}^{b-1} f(t) (~^{CFR}~_{a}\nabla^\alpha g)(t)=\sum_{t=a+1}^{b-1} (~^{CFR}\nabla_b^\alpha f)(t) g(t).$$
\end{itemize}
\end{theorem}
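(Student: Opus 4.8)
The plan is to mirror the continuous argument of Theorem~\ref{Integration by parts}, replacing the Dirichlet interchange of iterated integrals by its discrete counterpart. The pivotal combinatorial fact is the summation interchange identity
\begin{equation}\label{didenti}
\sum_{t=a+1}^{b-1}\varphi(t)\sum_{s=a+1}^{t}\psi(s)=\sum_{t=a+1}^{b-1}\psi(t)\sum_{s=t}^{b-1}\varphi(s),
\end{equation}
which is the discrete analogue of the identity~(\ref{ident1}). To prove~(\ref{didenti}) I would view the left-hand side as a double sum over the triangular index set $\{(t,s):a+1\le s\le t\le b-1\}$ and simply exchange the order of summation: for fixed $s$ the index $t$ runs from $s$ to $b-1$, which after relabeling the dummy variables yields the right-hand side. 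This is the only genuinely new ingredient, and it is where I expect the main (though modest) obstacle to lie, namely keeping the summation limits exactly aligned with those in the definitions~(\ref{dlfrac}) and~(\ref{drfrac}) of the discrete left and right fractional integrals.

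With~(\ref{didenti}) in hand, the first bullet follows by direct substitution. I would expand $(~^{CF}~_{a}\nabla^{-\alpha}\psi)(t)=\frac{1-\alpha}{B(\alpha)}\psi(t)+\frac{\alpha}{B(\alpha)}\sum_{s=a+1}^{t}\psi(s)$ inside $\sum_{t=a+1}^{b-1}\varphi(t)(~^{CF}~_{a}\nabla^{-\alpha}\psi)(t)$, split into two sums, leave the symmetric term $\frac{1-\alpha}{B(\alpha)}\sum_{t}\varphi(t)\psi(t)$ untouched, and apply~(\ref{didenti}) to the remaining double sum. Collecting terms reconstitutes $\sum_{t=a+1}^{b-1}\psi(t)(~^{CF}\nabla^{-\alpha}_b\varphi)(t)$, because the inner sum $\sum_{s=t}^{b-1}\varphi(s)$ produced by~(\ref{didenti}) is exactly the one occurring in the definition~(\ref{drfrac}) of the right integral. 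The second identity of the first bullet is obtained by the same computation with the roles of the two integral operators interchanged, using~(\ref{didenti}) read from right to left.

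For the second bullet I would reduce to the first exactly as in the continuous proof. Since the inverse relations established above give $(~^{CF}\nabla^{-\alpha}_b~^{CFR}\nabla_b^\alpha f)(t)=f(t)$ and $(~^{CF}~_{a}\nabla^{-\alpha}~^{CFR}~_{a}\nabla^\alpha g)(t)=g(t)$ for arbitrary $f,g$ on $\mathbb{N}_a$, I may set $\phi=~^{CFR}\nabla_b^\alpha f$ and $\varphi=~^{CFR}~_{a}\nabla^\alpha g$, so that $f=~^{CF}\nabla^{-\alpha}_b\phi$ while $(~^{CFR}~_{a}\nabla^\alpha g)(t)=\varphi(t)$. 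Substituting these into $\sum_{t=a+1}^{b-1}f(t)(~^{CFR}~_{a}\nabla^\alpha g)(t)$ turns it into $\sum_{t=a+1}^{b-1}\varphi(t)(~^{CF}\nabla^{-\alpha}_b\phi)(t)$; applying the second identity of the first bullet moves the right integral onto $\varphi$, yielding $\sum_{t=a+1}^{b-1}\phi(t)(~^{CF}~_{a}\nabla^{-\alpha}\varphi)(t)$. Recognizing $(~^{CF}~_{a}\nabla^{-\alpha}\varphi)(t)=g(t)$ together with $\phi(t)=(~^{CFR}\nabla_b^\alpha f)(t)$ then produces $\sum_{t=a+1}^{b-1}(~^{CFR}\nabla_b^\alpha f)(t)g(t)$, as required. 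The only point to verify is that these representations are legitimate for all admissible $f,g$, which holds precisely because both compositions of the discrete Riemann difference with its associated fractional integral equal the identity, so no function-space restriction analogous to the continuous case is needed here.
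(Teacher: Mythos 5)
Your proposal is correct and follows essentially the same route as the paper: the first bullet via the discrete summation-interchange identity over the triangular index set, and the second bullet by writing $f=(~^{CF}\nabla^{-\alpha}_b\,{}^{CFR}\nabla_b^\alpha f)$, invoking the first bullet to transfer the fractional integral, and then collapsing $(~^{CF}~_{a}\nabla^{-\alpha}\,{}^{CFR}~_{a}\nabla^\alpha g)(t)=g(t)$. The only difference is cosmetic: you spell out the interchange argument and the substitutions more explicitly than the paper does.
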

\begin{proof}
\begin{itemize}
  \item From the definition and the identity
  \begin{equation}\label{dident1}
    \sum_{a+1}^{b-1} \psi(t) (\sum_{s=a+1}^t\varphi(s))=\sum_{a+1}^{b-1} \varphi(s) (\sum_{t=s}^{b-1} \psi(t)),
  \end{equation}
  the first part follows.
  The other case follows similarly by the definition of the right fractional difference in ( \ref{drfrac}) and the identity (\ref{dident1}).
  \item From the statement before the Theorem and the first part we have
  \begin{eqnarray}
   \nonumber
    \sum_{t=a+1}^{b-1} f(t) (~^{CFR}~_{a}\nabla^\alpha g)(t) &=&  \sum_{t=a+1}^{b-1} (~^{CF}\nabla_b^{-\alpha} ~^{CFR}\nabla_b^\alpha f)(t) (~^{CFR}~_{a}\nabla^\alpha g)(t)
    \\ \nonumber
     &=&  \sum_{t=a+1}^{b-1} ( ~^{CFR}\nabla_b^\alpha f)(t) ( ~^{CF}~_{a}\nabla^{-\alpha}  ~^{CFR}~_{a}\nabla^\alpha g)(t)\\ \nonumber
     &=& \sum_{t=a+1}^{b-1} ( ~^{CFR}\nabla_b^\alpha f)(t)  g(t) .
     \end{eqnarray}

\end{itemize}

\end{proof}
Before proving a by part formula for Caputo fractional derivatives, we use the following two notations:
\begin{itemize}
  \item  The (left) discrete exponential   integral operator
\begin{equation}\label{dlIO}
( \textbf{ E }_{\lambda,a^+} \varphi )(t)=\sum_{s=a+1}^t (\frac{1}{1-\lambda})^{(t-a)}\varphi(s),~~t \in \mathbb{N}_a.
\end{equation}
  \item The (right) discrete exponential   integral operator

\begin{equation}\label{drIO}
( \textbf{ E}_{ \lambda,b^-}\varphi )(x)=\sum_{t=s}^{b-1}(\frac{1}{1-\lambda})^ {(b-t)}\varphi(t) ,~~t \in ~_{b}\mathbb{N}.
\end{equation}
\end{itemize}

\begin{theorem} (Integration by parts for the Caputo derivatives with exponential kernel)
 \label{dC by parts}
\begin{itemize}
  \item $\sum_{t=a}^{b-1} (~^{CFC}~_{a-1}\nabla^\alpha f)(t)g(t)= \sum_{t=a}^{b-1}f(t) (~^{CFR}\nabla_{b-1}^\alpha g)(t)+ \frac{B(\alpha)}{1-\alpha} f^\rho(t) \textbf{ E}_{ \frac{-\alpha}{1-\alpha},b^-}g )(t)|_a^b$.
  \item $\sum_{t=a+1}^{b} (~^{CFC}\nabla_{b+1}^\alpha f)(t)g(t)= \sum_{t=a+1}^{b}  f(t) (~^{CFR}~_{a+1}\nabla^\alpha g)(t)- \frac{B(\alpha)}{1-\alpha} f^\sigma(t) \textbf{ E}_{ \frac{-\alpha}{1-\alpha},a^+}g )(t)|_a^b$.
  \end{itemize}

\end{theorem}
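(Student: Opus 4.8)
The plan is to mirror the proof of the continuous Theorem~\ref{C by parts}: I would reduce each Caputo statement to the Riemann integration-by-parts formula already recorded in the second bullet of Theorem~\ref{discrete Integration by parts}, combined with the Caputo--Riemann conversion of Proposition~\ref{drelation between C and R}, and then let the correction term produced by that conversion reassemble into the discrete boundary term.

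For the first bullet I would begin by applying the first identity of Proposition~\ref{drelation between C and R}, with the base point shifted from $a$ to $a-1$, to write
\[
(~^{CFC}~_{a-1}\nabla^\alpha f)(t)=(~^{CFR}~_{a-1}\nabla^\alpha f)(t)-\frac{B(\alpha)}{1-\alpha}\,f(a-1)\,(1-\alpha)^{t-(a-1)}.
\]
Multiplying by $g(t)$ and summing $t$ from $a$ to $b-1$ splits the left-hand side into a Riemann piece and an exponentially weighted piece. On the Riemann piece I would invoke the second bullet of Theorem~\ref{discrete Integration by parts}, in its appropriately shifted form, to transfer the operator off $f$ and onto $g$, converting $\sum (~^{CFR}~_{a-1}\nabla^\alpha f)(t)\,g(t)$ into $\sum f(t)\,(~^{CFR}\nabla_{b-1}^\alpha g)(t)$. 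The leftover exponentially weighted sum is then to be recognized, after recalling that $\tfrac{1}{1-\lambda}=1-\alpha$, as the discrete operator $\textbf{E}_{\lambda,b^-}$ of \eqref{drIO} tested against the shifted argument $f^\rho$, yielding the evaluated contribution $\frac{B(\alpha)}{1-\alpha}\,f^\rho(t)\,(\textbf{E}_{\frac{-\alpha}{1-\alpha},b^-}g)(t)\big|_a^b$.

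The second bullet follows by the same three steps, now using the second identity of Proposition~\ref{drelation between C and R} and reading Theorem~\ref{discrete Integration by parts} in the opposite direction; alternatively, it can be obtained directly from the first bullet by applying the discrete $Q$-operator, whose action on the new differences was recorded in the remark above. That action interchanges left and right operators, sends the base points $a-1,b-1$ to $b+1,a+1$, and replaces the $\rho$-shift by the $\sigma$-shift, thereby turning the first boundary term into $-\frac{B(\alpha)}{1-\alpha}\,f^\sigma(t)\,(\textbf{E}_{\frac{-\alpha}{1-\alpha},a^+}g)(t)\big|_a^b$.

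The hard part will be bookkeeping rather than conceptual. In the nabla calculus the operators $\nabla_t$ and $-\Delta_t$ sitting inside the Riemann differences, together with the discrete summation-by-parts (Abel) identity, are precisely what force the shifted base points $a-1,b-1$ (respectively $a+1,b+1$) and the evaluations at $f^\rho$ (respectively $f^\sigma$); getting these index shifts simultaneously consistent across the range of summation and the two operators is the delicate step. The single most error-prone point is verifying that the one exponentially weighted sum left over from the Caputo--Riemann conversion telescopes exactly into the evaluated-boundary expression $\big|_a^b$ of $\textbf{E}_{\lambda,b^-}$, with the correct exponent in the kernel and the correct $\rho$-shift, since all the asymmetric shifts appearing in the statement are dictated by making that single identification come out right.
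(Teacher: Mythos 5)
Your proposal follows the paper's own proof exactly: the paper likewise obtains both bullets by combining the Riemann-type integration by parts formula of Theorem \ref{discrete Integration by parts} with the Caputo--Riemann conversion of Proposition \ref{drelation between C and R}, the leftover exponentially weighted sum being absorbed into the evaluated boundary term. Your write-up is in fact more detailed than the paper's one-line proof, which leaves implicit all of the index bookkeeping and the identification of the correction term with $\textbf{E}_{\frac{-\alpha}{1-\alpha},b^-}$ that you correctly single out as the delicate step.
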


 \begin{proof}
  The proof of the first part follows by Theorem \ref{discrete Integration by parts} and the first part of Proposition \ref{drelation between C and R} and the proof of the second part follows by Theorem \ref{discrete Integration by parts} and  the second part of Proposition \ref{drelation between C and R}.
\end{proof}

\section{Discrete fractional Euler-Lagrange equations}

We prove Euler-Lagrange equations for a Lagrangian containing the left new discrete Caputo derivative with discrete exponential kernel.

\begin{theorem}\label{dA1}
Let $0<\alpha < 1$ be non-integer, $a,b \in \mathbb{R},~~a<b,~~~~a\equiv~ b~(mod~1)$.  Assume that the functional  of the form
 $$J(f)=\sum_{t=a}^{b-1} L(t,f^\rho(t),~^{CFC}~_{a-1}\nabla^\alpha f(t) )$$
 has a local extremum in $S=\{y:(\mathbb{N}_{a-1} \cap ~_{b-1}\mathbb{N})\rightarrow\mathbb{R}: ~~y(a-1)=A, y(b-1)=B\}$ at some $f \in S$, where
 $L:(\mathbb{N}_{a-1} \cap ~_{b-1}\mathbb{N})\times \mathbb{R}\times \mathbb{R}\rightarrow \mathbb{R}$. Then,
\begin{equation}\label{E1}
[L_1(s) + ~^{CFR}\nabla_{b-1}^\alpha L_2(s)]  =0,~\texttt{for all}~ s \in (\mathbb{N}_{a-1} \cap ~_{b-1}\mathbb{N}),
\end{equation}
where $L_1(s)= \frac{\partial L}{\partial f^\rho}(s)$ and $L_2(s)=\frac{\partial L}{\partial ~^{CFC}~_{a-1}\nabla^\alpha f}(s)$.
\end{theorem}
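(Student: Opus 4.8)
The plan is to transcribe the continuous argument of Theorem~\ref{A1} into the nabla calculus, with integrals replaced by finite sums, the continuous integration-by-parts identity replaced by the discrete one of Theorem~\ref{dC by parts}, and the classical fundamental lemma replaced by its discrete form. First I would assume without loss of generality that $J$ attains a local maximum at $f\in S$. Any admissible competitor is $\widehat f=f+\epsilon\eta$ with $\eta$ in the class $H=\{\eta:(\mathbb{N}_{a-1}\cap {}_{b-1}\mathbb{N})\to\mathbb{R}:\ \eta(a-1)=\eta(b-1)=0\}$, so that the endpoint data $y(a-1)=A$, $y(b-1)=B$ are preserved. Because $L$ is smooth in its last two (real) slots, the ordinary Taylor expansion in the scalar $\epsilon$ gives
$$L\big(t,f^\rho+\epsilon\eta^\rho,\,{}^{CFC}{}_{a-1}\nabla^\alpha f+\epsilon\,{}^{CFC}{}_{a-1}\nabla^\alpha\eta\big)=L\big(t,f^\rho,{}^{CFC}{}_{a-1}\nabla^\alpha f\big)+\epsilon\big[\eta^\rho L_1+({}^{CFC}{}_{a-1}\nabla^\alpha\eta)L_2\big]+O(\epsilon^2),$$
with $L_1,L_2$ the indicated partials evaluated along $f$.

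Summing over $t$ and using that the extremum forces the coefficient of $\epsilon$ to vanish, the first variation must satisfy, for every $\eta\in H$,
$$\delta J(\eta,f)=\sum_{t=a}^{b-1}\Big[\eta^\rho(t)\,L_1(t)+\big({}^{CFC}{}_{a-1}\nabla^\alpha\eta\big)(t)\,L_2(t)\Big]=0.$$
The key step is to free $\eta$ from the fractional operator. Applying the first identity of Theorem~\ref{dC by parts} with $\eta$ in the role of $f$ and $L_2$ in the role of $g$ replaces $\sum_t ({}^{CFC}{}_{a-1}\nabla^\alpha\eta)(t)L_2(t)$ by $\sum_t \eta(t)\,({}^{CFR}\nabla_{b-1}^\alpha L_2)(t)$ together with the boundary term $\frac{B(\alpha)}{1-\alpha}\,\eta^\rho(t)\,\big(\mathbf{E}_{\frac{-\alpha}{1-\alpha},b^-}L_2\big)(t)\big|_a^b$. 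Since $\eta^\rho$ is sampled at $t=a$ and $t=b$, i.e.\ at $\eta(a-1)$ and $\eta(b-1)$, both of which vanish by construction, this boundary contribution is zero; this is the discrete counterpart of the natural-boundary-condition cancellation in Theorem~\ref{A1}.

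After the transfer I would align the two remaining sums at a common test node and then invoke the discrete fundamental lemma of the calculus of variations: choosing $\eta$ to be the indicator of a single interior point shows that the bracket must vanish there, giving $L_1(s)+{}^{CFR}\nabla_{b-1}^\alpha L_2(s)=0$ for the relevant $s\in(\mathbb{N}_{a-1}\cap {}_{b-1}\mathbb{N})$. I expect the main obstacle to lie precisely in this alignment, which has no continuous analogue. Because the Lagrangian depends on $f^\rho$ rather than $f$, the first summand carries $\eta^\rho$, and the shift $u=t-1$ needed to reindex it produces $L_1$ at the backward-shifted node $u+1$; reconciling this with the transferred term $\big({}^{CFR}\nabla_{b-1}^\alpha L_2\big)$ so that both are tested against $\eta$ at the same argument, and simultaneously checking that the truncations at $u=a-1$ and $t=b-1$ are harmless (again by $\eta(a-1)=\eta(b-1)=0$), is the delicate bookkeeping of the $\rho$-operator that distinguishes the discrete proof from that of Theorem~\ref{A1}.
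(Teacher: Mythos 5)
Your proposal follows the paper's proof essentially step for step: WLOG a local maximum, the $\epsilon$-Taylor expansion giving $\delta J(\eta,f)=\sum_{t=a}^{b-1}[\eta^\rho(t)L_1(t)+({}^{CFC}{}_{a-1}\nabla^\alpha\eta)(t)L_2(t)]=0$, transfer of the fractional operator onto $L_2$ via the first identity of Theorem \ref{dC by parts} with the boundary term annihilated by $\eta(a-1)=\eta(b-1)=0$, and conclusion by the discrete fundamental lemma of the calculus of variations. The $\eta$-versus-$\eta^\rho$ alignment that you flag as the main remaining obstacle is a genuine subtlety, but the paper does not resolve it either: it simply writes $\eta^\rho(s)$ in front of both summands after invoking its integration-by-parts formula (whose transferred term actually carries $\eta(t)$), so your more cautious account matches, and if anything is more honest than, the published argument.
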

\begin{proof}
Without loss of generality, assume that $J$ has local maximum in $S$ at $f$. Hence, there exists an $\epsilon>0$ such that $J(\widehat{f})-J(f)\leq 0$ for all $\widehat{f}\in S$ with $\|\widehat{f}-f\|=\sup_{t \in \mathbb{N}_a \cap ~_{b}\mathbb{N}} |\widehat{f}(t)-f(t)|< \epsilon$. For any $\widehat{f} \in S$ there is an $\eta \in H=\{y :(\mathbb{N}_{a-1} \cap ~_{b-1}\mathbb{N})\rightarrow\mathbb{R}:, ~~y(a-1)=y(b-1)=0\}$ such that $\widehat{f}=f+\epsilon \eta$. Then, the $\epsilon-$Taylor's theorem and the assumption implies that the first variation  quantity $\delta J(\eta,y)=\sum_{t=a}^{b-1}[\eta^\rho(t) L_1(t)+ (~^{CFC}~_{a-1}\nabla^\alpha \eta)(t) L_2(t)]dt=0$

 for all $\eta \in H$. To make the parameter $\eta$ free, we use the first integration by part formula in Theorem  \ref{dC by parts}, to reach
$$\delta J(\eta,f)=\sum_{s=a}^{b-1} \eta^\rho (s)[L_1(s) + ~^{CFR}\nabla_{b-1}^\alpha L_2(s)]+\eta^\rho(t)\frac{B(\alpha)}{1-\alpha} (\textbf{ E}_{ \frac{-\alpha}{1-\alpha},b^-}L_2 )(t)|_a^b =0,$$ for all $\eta \in H$, and hence the result follows by the discrete fundamental Lemma of calculus of variation.
\end{proof}
The term $(\textbf{ E}_{ \frac{-\alpha}{1-\alpha},b^-}L_2 )(t)|_a^b =0$ above is called the natural boundary condition.

\indent
Similarly, if we allow the Lagrangian to depend on the discrete right Caputo fractional derivative, we can state:
\begin{theorem}\label{dA2}
Let $0<\alpha \leq 1$ be non-integer, $a,b \in \mathbb{R},~~a<b,~~~~a\equiv~ b~(mod~1)$.  Assume that the functional $J$ of the form
 $$J(f)=\sum_{a+1}^{b} L(t,f^\sigma(t),~^{CFC}\nabla_{b+1}^\alpha f(t) )$$
 has a local extremum in $S=\{y :(\mathbb{N}_{a+1} \cap ~_{b+1}\mathbb{N})\rightarrow\mathbb{R}: ~~y(a+1)=A, y(b+1)=B\}$ at some $f \in S$, where
 $L:(\mathbb{N}_{a+1} \cap ~_{b+1}\mathbb{N})\times \mathbb{R}\times \mathbb{R}\rightarrow \mathbb{R}$. Then,
\begin{equation}\label{E1}
[L_1(s) + ~^{CFR}~_{a+1}\nabla^\alpha L_2(s)]  =0,~\texttt{for all}~ s \in (\mathbb{N}_{a+1} \cap ~_{b+1}\mathbb{N}),
\end{equation}
where $L_1(s)= \frac{\partial L}{\partial f^\sigma}(s)$ and $L_2(s)=\frac{\partial L}{\partial ~^{CFC}\nabla_{b+1}^\alpha f}(s)$.
\end{theorem}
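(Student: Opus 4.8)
The plan is to run the mirror image of the argument that proves Theorem \ref{dA1}, replacing the left operators by their right counterparts and invoking the second (rather than the first) integration-by-parts identity of Theorem \ref{dC by parts}. First I would assume, without loss of generality, that $J$ has a local maximum at $f$, so that $J(\widehat f)-J(f)\le 0$ for every admissible $\widehat f\in S$ that is $\epsilon$-close to $f$. Writing $\widehat f=f+\epsilon\eta$ then forces $\eta$ to lie in $H=\{y:(\mathbb{N}_{a+1}\cap~_{b+1}\mathbb{N})\to\mathbb{R}:~y(a+1)=y(b+1)=0\}$, since the boundary values $y(a+1)=A$, $y(b+1)=B$ are held fixed on $S$. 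Expanding the Lagrangian by the discrete $\epsilon$-Taylor theorem exactly as in Theorem \ref{dA1} gives $J(\widehat f)-J(f)=\epsilon\,\delta J(\eta,f)+O(\epsilon^2)$ with first variation
$$\delta J(\eta,f)=\sum_{t=a+1}^{b}\bigl[\eta^\sigma(t)L_1(t)+(~^{CFC}\nabla_{b+1}^\alpha\eta)(t)L_2(t)\bigr],$$
and since $\eta$ and $-\eta$ both belong to $H$, the extremum hypothesis forces $\delta J(\eta,f)=0$ for all $\eta\in H$.

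The heart of the proof is to free $\eta$ from the fractional difference. Here I would apply the second bullet of Theorem \ref{dC by parts} with the substitutions $f\mapsto\eta$, $g\mapsto L_2$, which transfers the right Caputo difference $~^{CFC}\nabla_{b+1}^\alpha$ off $\eta$ and onto $L_2$ as the left Riemann difference $~^{CFR}~_{a+1}\nabla^\alpha L_2$, at the price of the boundary contribution $-\frac{B(\alpha)}{1-\alpha}\eta^\sigma(t)(\textbf{E}_{\frac{-\alpha}{1-\alpha},a^+}L_2)(t)\big|_a^b$. This yields
$$\delta J(\eta,f)=\sum_{s=a+1}^{b}\eta^\sigma(s)\bigl[L_1(s)+~^{CFR}~_{a+1}\nabla^\alpha L_2(s)\bigr]-\frac{B(\alpha)}{1-\alpha}\eta^\sigma(t)(\textbf{E}_{\frac{-\alpha}{1-\alpha},a^+}L_2)(t)\Big|_a^b=0.$$

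Next I would observe that the boundary term drops, because $\eta$ (and hence its shift $\eta^\sigma$ at the relevant endpoints) vanishes at $a+1$ and $b+1$ by the definition of $H$; this is exactly the natural boundary condition $(\textbf{E}_{\frac{-\alpha}{1-\alpha},a^+}L_2)(t)|_a^b=0$ that one imposes when the endpoints are left free. What remains is $\sum_{s}\eta^\sigma(s)\bigl[L_1(s)+~^{CFR}~_{a+1}\nabla^\alpha L_2(s)\bigr]=0$ for every $\eta\in H$, whereupon the discrete fundamental lemma of the calculus of variations delivers the pointwise identity $L_1(s)+~^{CFR}~_{a+1}\nabla^\alpha L_2(s)=0$ on $\mathbb{N}_{a+1}\cap~_{b+1}\mathbb{N}$, as claimed.

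The step I expect to be the main obstacle is the index-shift bookkeeping. One must verify that the summation range $\sum_{t=a+1}^{b}$ appearing in the first variation matches precisely the range required by the second integration-by-parts identity of Theorem \ref{dC by parts}; that the forward shift $\eta^\sigma$ in the $L_1$ term is reconciled consistently with the argument produced by the by-parts formula; and that the endpoint evaluations $|_a^b$ of the left discrete exponential integral operator are genuinely annihilated by the vanishing of $\eta$ at $a+1$ and $b+1$. Once these $\sigma/\rho$ shifts and endpoint indices are tracked carefully, the remaining computation is routine and parallels the proof of Theorem \ref{dA1} verbatim.
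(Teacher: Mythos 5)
Your proposal is correct and follows exactly the route the paper takes: the paper's own proof of Theorem~\ref{dA2} simply says to mirror the argument of Theorem~\ref{dA1}, invoking the second integration-by-parts identity of Theorem~\ref{dC by parts} so that the right Caputo difference on $\eta$ becomes the left Riemann difference $~^{CFR}~_{a+1}\nabla^\alpha$ on $L_2$, with the boundary term killed by $\eta^\sigma$ vanishing at the endpoints (equivalently, yielding the natural boundary condition $(\textbf{E}_{\frac{-\alpha}{1-\alpha},a^+}L_2)(t)|_a^b=0$). Your fuller write-up, including the attention to the $\sigma$-shift bookkeeping, is a faithful expansion of the paper's terse proof rather than a different argument.
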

\begin{proof}
The proof is similar to  Theorem \ref{dA1} by applying the second integration by parts formula in Theorem \ref{dC by parts} to get the natural boundary condition of the form $(\textbf{ E}_{ \frac{-\alpha}{1-\alpha},a^+}L_2 )(t)|_a^b =0$.
\end{proof}
\begin{example}
Below we provide a discrete  example of physical interest under Theorem \ref{dA1}. We consider the following fractional discrete action,

  $J(y)=\sum_{t=a}^{b-1}[\frac{1}{2}( ~^{CFC}~_{a-1}\nabla^\alpha y(t))^2-V(y^\rho(t))],$ where $0<\alpha <1$ and with $y(a-1),~~y(b-1)$ are assigned or with the natural boundary condition $$(\textbf{ E}_{ \frac{-\alpha}{1-\alpha},b^-}~^{CFC}~_{a-1}\nabla^\alpha y(t) )(t)|_a^b =0. $$  Then,  the Euler-Lagrange equation by applying Theorem \ref{dA1} is
       $$(~^{CFR}\nabla_{b-1}^\alpha ~o ~~^{CFC}~_{a-1}\nabla^\alpha y)(s)-\frac{dV}{dy}(s)=0~\texttt{for all}~ s \in (\mathbb{N}_{a-1}\cap ~_{b-1}\mathbb{N}).$$
       For the sake of comparisons with the classical discrete fractional Euler-Lagrange equations within nabla we refer to \cite{ThELE}.
\end{example}

\section{Conclusions}

Caputo-Fabrizio derivative  was already applied successfully since it was recently suggested. Several applications of it were implemented  in some areas where the non-locality appears in real world phenomena.
However, the properties of this derivative should be deeply explored and applied to describe the real world phenomena.
In this manuscript we elaborated  the  discrete version of the Caputo-Fabrizio derivative. By proving the integration by parts we open the  gate for application in discrete variational principles and their applications in control theory. The reported discrete fractional Euler-Lagrange will play a crucial role in constructing the related Lagrangian and Hamiltonian dynamics.

%\end{itemize}

\end{document}